\DeclareMathAlphabet\mathbfcal{OMS}{cmsy}{b}{n}
\numberwithin{equation}{section}
\newtheorem{theorem}{Theorem}[section]
\newtheorem{definition}[theorem]{Definition}
\newtheorem{lemma}[theorem]{Lemma}
\newtheorem{proposition}[theorem]{Proposition}
\newtheorem{corollary}[theorem]{Corollary}
\newtheorem{remark}[theorem]{Remark}
\newtheoremstyle{named}{}{}{\itshape}{}{\bfseries}{.}{.5em}{\thmnote{#3\ }#1}
\theoremstyle{named}
\newtheorem*{namedtheorem}{Theorem}
 \newcommand{\RR}{\mathbb{R}}
\newcommand{\QQ}{\mathbb{Q}} \newcommand{\CC}{{\mathbb C}}
 \newcommand{\NN}{{\mathbb N}}
\newcommand{\cB}{\mathcal{B}}
\newcommand{\cE}{\mathcal{E}}
\newcommand{\cH}{\mathcal{H}}
\newcommand{\cJ}{\mathcal{J}}
\newcommand{\cL}{\mathcal{L}}
\newcommand{\cP}{\mathcal{P}}
\newcommand{\Ric}{\operatorname{Ric}}
\newcommand{\vol}{\operatorname{vol}}
\newcommand{\ord}{\mathrm{ord}}
\newcommand{\ddc}{dd^c}
\title{A quantization proof of the uniform Yau--Tian--donaldson conjecture}
\begin{document}

\author[K. Zhang]{Kewei Zhang}
\address{Laboratory of Mathematics and Complex Systems, School of Mathematical Sciences, Beijing Normal University, Beijing, 100875, P. R. China}
\email{kwzhang@bnu.edu.cn}

\maketitle

\begin{abstract}
Using quantization techniques, we show that the $\delta$-invariant of Fujita--Odaka coincides with the optimal exponent in certain Moser--Trudinger type inequality. Consequently we obtain a uniform Yau--Tian--Donaldson theorem for the existence of twisted K\"ahler--Einstein metrics with arbitrary polarizations. Our approach mainly uses pluripotential theory, which does not involve Cheeger--Colding--Tian theory or the non-Archimedean language. A new computable criterion for the existence of constant scalar curvature K\"ahler metrics is also given.
\end{abstract}

\tableofcontents

\section{Introduction}

A fundamental problem in K\"ahler geometry is to find canonical metrics on a given manifold. A problem of this sort is often called the Yau--Tian--Donaldson (YTD) conjecture, which predicts that the existence of canonical metrics is equivalent to certain algebro-geometric stability notion. This article, as a continuation of the author's recent joint work with Rubinstein--Tian \cite{RTZ20}, is mainly concerned with the existence of twisted K\"ahler--Einstein (tKE) metrics on projective manifolds. 
We will present a short quantization proof of a uniform version of the YTD conjecture, by directly relating Fujita--Odaka's $\delta$-invariant \cite{FO18} (that characterizes unform Ding stability \cite{BJ17,BoJ18}) to the existence of tKE metrics. 

The key ingredient in our approach is the analytic $\delta$-invariant defined as the optimal exponent of certain Moser--Trudinger inequality, which we denote by $\delta^A$. This analytic threshold characterizes the coercivity of Ding functionals and hence governs the existence of tKE metrics. 
In the prequel \cite{RTZ20} we set up a quantization approach whose goal is to show that $\delta$ and $\delta^A$ are actually equal, a conjecture previously made by the author in \cite{Zha20}. If this works out then one would have a new proof
for the uniform YTD conjecture. Although this goal was not achieved in \cite{RTZ20}, we were able to prove a quantized version saying that $\delta_m=\delta^A_m$ indeed holds at each level $m$, so that $\delta_m$ characterizes the existence of certain balanced metrics in the $m$-th Bergman space. In the view of Donaldson's quantization framework \cite{Don01}, this makes our conjectural picture about $\delta$ and $\delta^A$ even more promising.

In this article we completely solve our conjecture. Our result can be viewed as an analogue of Demailly's result \cite[Appendix]{CS08} (see also Shi \cite{Shi10}) on the algebraic interpretation of Tian's $\alpha$-invariant, the proof of which actually greatly influenced this article and its prequel \cite{RTZ20}.

\begin{namedtheorem}[Main]
The equality $\delta(L)=\delta^A(L)$ holds for any ample line bundle $L$.
\end{namedtheorem}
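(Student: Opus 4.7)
The plan is to deduce $\delta(L)=\delta^A(L)$ by passing to the limit $m\to\infty$ in the quantized identity $\delta_m(L)=\delta^A_m(L)$ proved in the prequel \cite{RTZ20}. On the algebraic side, Blum--Jonsson \cite{BJ17} (see also \cite{FO18}) give $\delta_m(L)\to\delta(L)$, so the remaining task is to establish the analytic convergence $\delta^A_m(L)\to\delta^A(L)$.

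One of the two inequalities is essentially tautological: every $m$-th Bergman potential is plurisubharmonic, so any Moser--Trudinger inequality that holds on all of $\PSH(X,L)$ with constant $\delta^A$ automatically holds on the smaller Bergman class with the same constant. Hence $\delta^A\leq\delta^A_m$ for every $m$, and combining with the quantized identity and the Blum--Jonsson limit gives $\delta^A(L)\leq\delta(L)$ for free.

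The heart of the proof is the reverse inequality $\delta(L)\leq\delta^A(L)$. The strategy I would follow is to sample an arbitrary $\phi\in\PSH(X,L)$ by Bergman potentials $\phi_m$ in the $m$-th Bergman space, with quantitative control on both sides of the Moser--Trudinger functional. On the energy side, the on-diagonal Bergman kernel asymptotics of Bouche--Tian--Zelditch yield $E(\phi_m)=E(\phi)+O(\log m/m)$. On the exponential-integral side, an Ohsawa--Takegoshi style $L^2$ extension produces sections realizing $\phi_m\geq\phi-C\log m/m$, which upper-bounds $\int_X e^{-\phi_m}$ in terms of $\int_X e^{-\phi}$, while lower semicontinuity provides the matching inequality. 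Applying the quantized Moser--Trudinger inequality to $\phi_m$ with optimal constant $\delta^A_m=\delta_m$, and sending $m\to\infty$ using these two comparisons, transports the inequality to $\phi$ with exponent $\lim_m\delta_m=\delta$, which is exactly what is needed.

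The main obstacle I expect is controlling the exponential integral $\int_X e^{-\phi}$ uniformly in the Bergman approximation when $\phi$ is singular. Convergence in energy (or $L^1$) is too weak to compare $\int e^{-\phi_m}$ with $\int e^{-\phi}$ without an effective lower envelope, so the delicate step is to pick the sampling so that the pointwise error $\phi_m-\phi$ is controlled uniformly in $m$ by a quantity that does not overwhelm the gap $\delta_m-\delta$. A robust route is to first reduce via Demailly regularization to $\phi$ with analytic singularities and then Bergman-sample using sections of $mL$ vanishing along the relevant ideal, for which the Ohsawa--Takegoshi extension produces the required quantitative envelope; this mirrors the balancing of error terms that drives Demailly's algebraic interpretation of the $\alpha$-invariant in \cite{CS08}, which the paper cites as the guiding analogy.
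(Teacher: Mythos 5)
Your overall frame (bound $\delta^A$ above and below by $\lim_m\delta_m$ and use the Fujita--Odaka/Blum--Jonsson convergence $\delta_m\to\delta$) is the same as the paper's, and your ``easy'' direction is essentially the paper's Step 1 --- except that it is not tautological: the quantized threshold $\delta_m$ is defined with the quantized energy $E_m$, not with $E$, so restricting the Moser--Trudinger inequality to the Bergman space does not by itself give $\delta^A\leq\delta_m$. One needs the comparison $E_m(\phi)\leq(1-\varepsilon)E(\phi)+\varepsilon\sup\phi+\varepsilon$ on $\cB_m(X,\omega)$ (Proposition \ref{prop:E-m<E}) together with an $\alpha$-invariant/H\"older absorption of the $\varepsilon$-terms. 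This is a known and fixable point, but it should be said.

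The genuine gap is in the hard direction $\delta\leq\delta^A$. First, your transfer of the exponential integral goes the wrong way: the Ohsawa--Takegoshi/Demailly bound $\phi_m\geq\phi-C\log m/m$ lets you dominate $\int e^{-\lambda\phi_m}$ by $\int e^{-\lambda\phi}$, whereas to deduce a bound on $\int e^{-\lambda(\phi-E(\phi))}d\mu_\theta$ from the quantized inequality applied to $\phi_m$ you must dominate $e^{-\lambda\phi}$ by $e^{-\lambda\phi_m}$ times a controlled factor, i.e.\ you need an upper bound $\phi_m\leq\phi+o(1)$ --- and no such pointwise bound holds with constants uniform in $\phi$. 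The paper's substitute is the exact identity $\int_Xe^{m(\phi^{(m)}-\phi)}\omega^n=d_m$ (equation \eqref{eq:int-m-phi--phi-m=d-m}), exploited through a H\"older factor with exponent $\sqrt{m}$. Second, and decisively, your error terms are not uniform in $\phi$: the Bouche--Tian--Zelditch asymptotics you invoke are for the Bergman kernel of the varying weight $m\phi$, so the $O(\log m/m)$ in $E(\phi_m)=E(\phi)+O(\log m/m)$ depends on $\phi$. But the final constant $C_\lambda$ must be uniform over all $\phi\in\cH(X,\omega)$ at a \emph{fixed} level $m$ (the quantized constant $C_{m,\lambda}$ depends on $m$, so you cannot let $m$ depend on $\phi$); $\phi$-dependent errors therefore destroy the conclusion, and reducing by Demailly regularization to analytic singularities does not help, since the non-uniformity is already present for smooth potentials --- this is exactly the ``main stumbling block'' the paper identifies. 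The paper overcomes it with Berndtsson's quantized maximum principle along a $C^{1,1}$ geodesic (Proposition \ref{prop:E<1-e-E-m}), which yields $E(\phi)\leq E_m\big(((1-\varepsilon)\phi)^{(m)}\big)+\varepsilon\sup\phi$ for \emph{all} $\phi$ simultaneously once $m\geq m_0(X,L,\omega,\varepsilon)$, because the only Bergman asymptotics needed are those of the fixed background metric $\omega$. Without an ingredient of this kind your scheme only produces, for each fixed $\phi$, an estimate with a constant depending on $\phi$, which does not prove $\delta^A(L)\geq\delta(L)$.
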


Consequently we obtain a new proof of the uniform YTD conjecture, in a much simpler fashion than the other known approaches in the literature.  More precisely, our approach only uses the following analytic ingredients:
\begin{itemize}
	\item Tian's seminal work \cite{Tian89} on the asymptotics of Bergman kernels (see also Bouche \cite{Bou90});
	\item the lower semi-continuity result of Demailly--Koll\'ar \cite{DK01};
	\item the existence of geodesics in the space of K\"ahler metrics going back to Chen \cite{Chen00};
	\item the variational approach of Berman, Boucksom, Eyssidieux, Guedj and Zeriahi \cite{BBGZ,BBEGZ19};
	\item a quantized maximum principle due to Berndtsson \cite{Bern09}.
\end{itemize}
While on the algebraic side, we only need
\begin{itemize}
	\item Fujita--Odaka's basis divisor characterization of $\delta_m$ \cite{FO18};
	\item Blum--Jonsson's valuative definition of $\delta$ \cite{BJ17}.
\end{itemize}

When the underlying manifold is Fano,
a special case of our main theorem  has essentially been obtained by Berman--Boucksom--Jonsson \cite{BBJ18} (see also \cite[Appendix]{CRZ19} and \cite[Corollary 3.10]{Zha20}), which says that $\min\{s,\delta\}=\min\{s,\delta^A\}=\ $the greatest Ricci lower bound, where $s$ denotes the nef threshold. Note that the approach in \cite{BBJ18} crucially relies on the convexity of twisted K-energy and the compactness of weak geodesic rays, which unfortunately cannot directly yield $\delta=\delta^A$ when these thresholds surpass $s$.
In contrast, 
our quantization argument mainly takes place in the finite dimensional Bergman space without involving the convexity of Ding or Mabuchi functionals. Hence as a consequence, we can treat arbitrary (even irrational!) polarizations and establish the very much desired equality $\delta=\delta^A$. Somewhat surprisingly, our approach not only yields stronger results, but in fact comes with a quite short proof. \footnote{However we should emphasize that the non-Archimedean formalism in \cite{BBJ18} indeed plays a key role when it comes to the cscK problem; see e.g. \cite{Li20-cscK} for some recent breakthrough.}
Note that our methods extend easily to the case of klt currents as treated in \cite{BBJ18} (which we will indeed adopt in what follows), and more generally also to the coupled soliton case considered in \cite{RTZ20}. Our work even has applications in finding constant scalar curvature K\"ahler (cscK) metrics, since we will give a new computable criterion for the coercivity of the K-energy.

\textbf{Organization.} The rest of this article is organized as follows. We will fix our setup and notation, and state more precisely our main results in Section \ref{sec:setup-results}. In Section \ref{sec:existence} we elaborate on how $\delta^A$ is related to the existence of canonical metrics. Then in Section \ref{sec:quantization} we recall some necessary quantization techniques on the Bergman space and prove the key estimate, Proposition \ref{prop:E<1-e-E-m}. Finally, our main results, Theorems \ref{thm:delta-delta-A}, \ref{thm:YTD} and \ref{thm:cscK}, are proved in Section \ref{sec:proof}.

\textbf{Acknowledgments.}
The author is grateful to Gang Tian for inspiring conversations during this project. He also thanks Chi Li, Yanir Rubinstein, Yalong Shi, Feng Wang, Mingchen Xia and Xiaohua Zhu for reading the first draft and for valuable comments. Special thanks go to Bo Berndtsson and Robert Berman for letting me know of an alternative proof of Proposition \ref{prop:E<1-e-E-m}, to S\'ebastien Boucksom for clarifying some points in \cite{BBJ18} and also to M. Hattori for pointing out an imprecision in the statement of Theorem \ref{thm:cscK} in previous versions. The author is supported by the China post-doctoral grant BX20190014 and NSFC grant 12101052.

\section{Setup and the main results}
\label{sec:setup-results}

\subsection{Notation and definitions}

Let $X$ be a projective manifold of dimension $n$ with 
an ample $\RR$-line bundle $L$ over it.
%Namely, $L$ is an $\RR_+$-linear combination of some ample line bundles (cf. \cite{Laz-bookI}).
 Fix a smooth Hermitian metric $h$ on $L$ such that 
$$\omega:=-\ddc\log h\in c_1(L)$$
is a K\"ahler form (here $\ddc=\frac{\sqrt{-1}\partial\bar{\partial}}{2\pi}$).
Put $V:=\int_X\omega^n=L^n.$ To make our result a bit more general, we will also fix (following \cite{BBJ18})
$$
\text{a closed positive $(1,1)$-current $\theta$ with klt singularities,}
$$
meaning that, when writing $\theta=\ddc\psi$ locally, one has $e^{-\psi}\in L^p_{loc}$ for some $p>1$. A case of particular interest is when $\theta=[\Delta]$ is the integration current along some effective klt divisor $\Delta$, which  relates to the edge-cone metrics for log pairs. The reader may take $\theta=0$ for simplicity as it will make no essential difference.

Now we recall the definition of $\delta$-invariant, which was first introduced by Fujita--Odaka \cite{FO18} using basis type divisors, and then reformulated by Blum--Jonsson \cite{BJ17} in a more valuative fashion. To incorporate $\theta$, we will use the following definition of Berman--Boucksom--Jonsson \cite{BBJ18}:
\begin{equation*}
    \label{eq:def-delta-L}
    \delta(L;\theta):=\inf_{E}\frac{A_\theta(E)}{S_L(E)}.
\end{equation*}
Here $E$ runs through all the prime divisors \emph{over} $X$, i.e., $E$ is a divisor contained in some birational model $Y\xrightarrow{\pi}X$ over $X$.
Moreover,
$$
A_\theta(E):=1+\ord_E(K_Y-\pi^*K_X)-\ord_E(\theta)
$$
denotes the log discrepancy, where $\ord_E(\theta)$
is the Lelong number of $\pi^*\theta$ at a very generic point of $E$. And
$$
S_L(E):=\frac{1}{\vol(L)}\int_0^\infty\vol(\pi^*L-xE)dx$$ 
denotes the expected vanishing order of $L$ along $E$. 

Historically, the case of the most interest is when $L=-K_X$ and $\theta=0$, i.e., the Fano case.
Regarding the existence of K\"ahler--Einstein metrics on such manifolds, a notion called K-stability was introduced by Tian \cite{Tian97} and later reformulated more algebraically by Donaldson \cite{Don02}. This stability notion has recently been further polished by Fujita and Li's valuative criterion \cite{Li17,Fuj19}, and we now know (see \cite[Theorem B]{BJ17}) that $\delta(-K_X)>1$ is equivalent to $(X,-K_X)$ being uniformly K-stable, a condition stronger than K-stability (but actually these two are equivalent, at least in the smooth setting). It is also known that uniform K-stability is equivalent to the uniform Ding stability of Berman \cite{Ber16}. More recently Boucksom--Jonsson \cite{BJ17} further extend the definition of uniform Ding stability to general polarizations using $\delta$-invariants, which we will adopt in this article.

\begin{definition}
We say $(X,L, \theta)$ is uniformly Ding stable if $\delta(L;\theta)>1$.
\end{definition}

Under the YTD framework, it is expected that such a notion would imply the existence of tKE metrics.
In the literature, the most examined case is when $c_1(L)=c_1(X)-[\theta]$, namely, the ``log Fano'' setting.  
By using continuity methods (cf. \cite{Tian15,CDS15,DS16YTD,LTW17,TW19}) or the variational approach (cf. \cite{BBJ18,LTW19,Li19-G-uniform}), we now have a fairly good understanding of the YTD conjecture in this scenario.
 The upshot is that one can indeed find a K\"ahler current $\omega_{tKE}\in c_1(L)$ solving
\begin{equation*}
	\label{eq:tKE-eq}
	\Ric(\omega_{tKE})=\omega_{tKE}+\theta
\end{equation*}
under the stability assumption.
Here $\Ric(\cdot):=-\ddc\log\det(\cdot)$ denotes the Ricci operator. The solution $\omega_{tKE}$ is precisely what we mean by a twisted K\"ahler--Eisntein metric (cf. also \cite{BBEGZ19,BBJ18}).

However, to the author's knowledge, all the known approaches to the above statement does not work well in the case where $\theta$ is merely quasi-positive, one main difficulty being that there is no convexity available for twisted K-energy in the non-Fano setting.
In what follows we will present a quantization approach to circumvent this difficulty, which allows us to work even without the Fano condition. 

More precisely, given any (not necessarily semi-positive) smooth representative $\eta\in c_1(X)-c_1(L)-[\theta]$, we want to investigate the following tKE equation:
\begin{equation}
	\label{eq:tKE-general}
	\Ric(\omega_{tKE})=\omega_{tKE}+\eta+\theta.
\end{equation}
To study this, 
a crucial input is taken from the work of Ding \cite{Ding88}, who essentially showed that the solvability of the above equation is governed by certain Moser--Trudinger type inequality.
%(cf. also \cite{Tian97,TZ00,PSSW08,Ber13b,SW16,BBEGZ19}). 
 Inspired by this viewpoint, the author introduced an analytic $\delta$-invariant in \cite{Zha20}, which we now turn to describe.

Put
$$
\cH(X,\omega):=\big\{\phi\in C^\infty(X,\RR)\big|\omega_\phi:=\omega+\ddc\phi>0\big\}.
$$
Let $E:\cH(X,\omega)\rightarrow\RR$ denote the Monge--Amp\`ere energy defined by
\begin{equation*}
E(\phi):=
    \frac{1}{(n+1)V}\sum_{i=0}^n\int_X\phi\omega^{n-i}\wedge\omega^i_\phi\ \text{for }\phi\in\cH(X,\omega).
\end{equation*}
Also fix a smooth representative $\theta_0\in[\theta]$, so we can write $\theta=\theta_0+\ddc\psi$ for some usc function $\psi$ on $X$. We may rescale $\psi$ such that
\begin{equation}
	\label{eq:def-mu-theta}
	\mu_\theta:=e^{-\psi}\omega^n
\end{equation}
defines a probability measure on $X$ (i.e., $\int_Xd\mu_\theta=1$). Note that $\theta$ being klt is equivalent to saying that for any $p>1$, sufficiently close to 1, there exists $A_p>0$ such that
\begin{equation}
\label{eq:psi-L-p}
	\int_Xe^{-p\psi}\omega^n<A_p.
\end{equation}
The analytic $\delta$-invariant of $(X,L,\theta)$ is then defined by
\begin{equation}
	\label{eq:def-delta-A}
	\delta^A(L;\theta):=\sup\bigg\{\lambda>0\bigg|\exists C_\lambda>0\text{ s.t. }\int_Xe^{-\lambda(\phi-E(\phi))}d\mu_\theta<C_\lambda\text{ for any }\phi\in\cH(X,\omega)\bigg\},
\end{equation}
which does not depend on the choice of $\omega$ or $\theta_0$. As explained in \cite{Zha20}, $\delta^A(L;\theta)>1$ is equivalent the coercivity of certain twisted Ding functional whose critical point gives rise to the desired tKE metric.
It is further conjectured in \cite{Zha20} that one should have 
$\delta(L;\theta)=\delta^A(L;\theta).$
Given this, then \eqref{eq:tKE-general} can be solved when $\delta(L;\theta)>1$, i.e., when $(X,L,\theta)$ is uniformly Ding stable.
 
\subsection{Main results}

In this article we confirm the aforementioned conjecture.
\begin{theorem}[Main Theorem]
\label{thm:delta-delta-A}
	For any ample $\RR$-line bundle $L$, one has
	$$
	\delta(L;\theta)=\delta^A(L;\theta).
	$$
\end{theorem}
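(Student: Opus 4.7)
The plan is to leverage the quantized identity $\delta_m(L;\theta)=\delta_m^A(L;\theta)$ at each level $m$, already established in the prequel \cite{RTZ20}, together with the classical convergence $\delta_m\to\delta(L;\theta)$ of Fujita--Odaka \cite{FO18}. This reduces the theorem to the purely analytic statement that $\delta_m^A(L;\theta)\to\delta^A(L;\theta)$ as $m\to\infty$, which will constitute the heart of the argument.

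First I would verify the easy inclusion $\delta^A\leq\liminf_m\delta_m^A$. This amounts to checking that if $\lambda<\delta^A$, then for all sufficiently large $m$ and every $m$-th Bergman potential $\phi$, the quantized Moser--Trudinger integral $\int_X e^{-\lambda(\phi-E_m(\phi))}\,d\mu_\theta$ is uniformly bounded. Since Bergman potentials sit inside $\cH(X,\omega)$ and Tian's Bergman kernel expansion \cite{Tian89} forces the discrepancy between $E_m$ and $E$ to vanish on this subclass, the assumed smooth Moser--Trudinger bound transfers, with a harmless $o(1)$ loss, to the quantized one.

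The delicate direction is $\limsup_m\delta_m^A\leq\delta^A$. Here I would start from an arbitrary $\phi\in\cH(X,\omega)$ and approximate it by its $m$-th Bergman potentials $\phi_m$, which by Tian converge uniformly to $\phi$. The aim is to upgrade the finite-dimensional bound
\[
\int_X e^{-\lambda(\phi_m-E_m(\phi_m))}\,d\mu_\theta\leq C^{(m)}_\lambda,
\]
valid whenever $\lambda<\delta_m^A$, into the smooth Moser--Trudinger bound for $\phi$ with exponent $\lambda$. Two ingredients make this viable. First, the key Proposition \ref{prop:E<1-e-E-m}, which is a one-sided energy comparison of the form $E(\phi)\leq E_m(\phi)+o_m(1)$, uniform in $\phi$; this guarantees that when we replace $E_m(\phi_m)$ by $E(\phi)$ in the exponent, the integrand is bounded from above by the quantized one up to a multiplicative constant independent of $\phi$. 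Second, the Demailly--Koll\'ar \cite{DK01} lower semicontinuity of complex singularity exponents allows us to pass to the limit $m\to\infty$ under the integral sign, arriving at $\int_X e^{-\lambda(\phi-E(\phi))}\,d\mu_\theta<\infty$ with a uniform bound depending only on $\lambda$.

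The main obstacle, as I expect, lies in establishing Proposition \ref{prop:E<1-e-E-m}. The natural strategy is to express $E_m(\phi)$ as the normalized logarithm of a Bergman determinant at level $m$ and then exploit Berndtsson's quantized maximum principle \cite{Bern09} (together with positivity of direct images in families) to interpolate between $E_m(\phi)$ and the continuous Monge--Amp\`ere energy $E(\phi)$. Securing a bound that is \emph{uniform} in $\phi\in\cH(X,\omega)$, rather than merely on bounded or normalized subfamilies, is the technically subtle point, since such uniformity is precisely what the Demailly--Koll\'ar step needs in order to close the argument. Once this is in place, combining the three convergences yields $\delta(L;\theta)=\lim_m\delta_m=\lim_m\delta_m^A=\delta^A(L;\theta)$, which is the theorem.
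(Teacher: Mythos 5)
Your overall architecture is the same as the paper's: quantize, use the identification of the quantized threshold $\delta_m$ (via \cite{RTZ20}) and the convergence $\delta_m\to\delta$, handle the direction $\delta^A\le\delta$ by the known one-sided comparison $E_m\le(1-\varepsilon)E+\varepsilon\sup\phi+\varepsilon$ on $\cB_m$ (your ``harmless loss'' is really a multiplicative $(1-\varepsilon)$ absorbed by H\"older and the $\alpha$-invariant, not an $o(1)$), and attack the hard direction $\delta\le\delta^A$ through Proposition \ref{prop:E<1-e-E-m}, proved with Berndtsson's quantized maximum principle. Up to this point you have reconstructed the paper's skeleton.

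The genuine gap is in how you close the hard direction. Having replaced $E(\phi)$ by $E_m(\tilde\phi^{(m)})$, you still must replace $\phi$ by its Bergman approximant in the exponent, i.e.\ control the factor $e^{\lambda(\tilde\phi^{(m)}-\tilde\phi)}$, and here your plan fails on two counts. First, you cannot ``pass to the limit $m\to\infty$ under the integral sign'': the quantized Moser--Trudinger constants $C^{(m)}_\lambda$ are not uniform in $m$, so any argument that sends $m\to\infty$ loses the bound; the Demailly--Koll\'ar theorem is not used for this in the paper at all (it only enters, inside \cite{RTZ20}, to identify the analytic $\delta_m$ with Fujita--Odaka's algebraic one). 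Second, for fixed $m$ the convergence $\phi^{(m)}\to\phi$ is uniform on $X$ only for each fixed $\phi$, with a rate depending on $\phi$, so a pointwise replacement of $\phi$ by $\phi^{(m)}$ destroys exactly the uniformity in $\phi$ that the definition of $\delta^A$ requires; and Demailly--Koll\'ar semicontinuity, being a statement about a fixed limit function, supplies no uniform constant either. You also locate the subtlety in the wrong place: Proposition \ref{prop:E<1-e-E-m} \emph{is} uniform in $\phi$ as proved (Berndtsson's maximum principle plus Tian's expansion for the fixed reference metric $\omega$); the missing idea is the elementary identity $\int_Xe^{m(\phi^{(m)}-\phi)}\omega^n=d_m$ of \eqref{eq:int-m-phi--phi-m=d-m}. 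The paper fixes one sufficiently large $m$ (depending only on $\lambda,\varepsilon$), applies a generalized H\"older inequality with three factors --- the Bergman discrepancy term $\bigl(\int_Xe^{\sqrt m(\tilde\phi^{(m)}-\tilde\phi)}d\mu_\theta\bigr)^{\lambda/\sqrt m}$, controlled by $d_m^{\lambda/m}$ and the klt condition \eqref{eq:psi-L-p}; the quantized Moser--Trudinger term with exponent $\lambda/(1-\tfrac{\lambda}{\sqrt m}-\tfrac{\lambda\varepsilon}{\alpha})<\delta_m$; and an $\alpha$-invariant term --- and never takes a limit under the integral. Without this (or an equivalent uniform control of $\phi^{(m)}-\phi$), your argument does not yield the uniform constant $C_\lambda$ demanded by \eqref{eq:def-delta-A}.
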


In particular uniform Ding stability implies the coercivity of twisted Ding functionals and 
as a consequence, we obtain a new proof of the uniform YTD conjecture and generalize the known results in the log Fano case (e.g., \cite[Theorem A]{BBJ18}) to the following more general setting, with possibly irrational polarizations.

\begin{theorem}
\label{thm:YTD}
Assume that $(X,L,\theta)$ is uniformly Ding stable. Then for any  smooth form $\eta\in(c_1(X)-c_1(L)-[\theta])$, there exists a K\"ahler current $\omega_{tKE}\in c_1(L)$ solving
$$
\Ric(\omega_{tKE})=\omega_{tKE}+\eta+\theta.
$$

\end{theorem}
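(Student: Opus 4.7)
The plan is to obtain this theorem as an essentially immediate consequence of the Main Theorem~\ref{thm:delta-delta-A} combined with the standard analytic theory set up in Section~\ref{sec:existence}. Once the equality $\delta=\delta^A$ is known, the passage from uniform Ding stability to the existence of a tKE current is a routine assembly of familiar tools: the Moser--Trudinger inequality yields coercivity of a twisted Ding functional, and the variational approach of Berman--Boucksom--Eyssidieux--Guedj--Zeriahi \cite{BBGZ,BBEGZ19} then produces a minimizer which solves the equation.

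More concretely, by hypothesis $\delta(L;\theta)>1$, so by Theorem~\ref{thm:delta-delta-A} also $\delta^A(L;\theta)>1$. From \eqref{eq:def-delta-A} I may pick $\lambda>1$ and $C_\lambda>0$ with
$$
\int_X e^{-\lambda(\phi-E(\phi))}\,d\mu_\theta \;<\; C_\lambda \qquad \text{for every } \phi\in\cH(X,\omega).
$$
This is the only analytic inequality needed below.

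I would then introduce the twisted Ding functional $D_\eta(\phi):=-E(\phi)-\log\int_X e^{-\phi+f_\eta}\,d\mu_\theta$, where $f_\eta\in C^\infty(X)$ is the smooth Ricci-type potential determined by $\ddc f_\eta=\Ric(\omega)-\omega-\eta-\theta_0$ (the right-hand side is an exact form precisely because $\eta\in c_1(X)-c_1(L)-[\theta]$). The Euler--Lagrange equation of $D_\eta$ is, after normalization, precisely \eqref{eq:tKE-general}. A direct Jensen argument starting from the displayed inequality yields coercivity on $\cH(X,\omega)$: there exists $\varepsilon>0$ with $D_\eta(\phi)\geq \varepsilon J(\phi)-C$, where $J$ is any standard Aubin-type exhaustion functional, and this estimate extends to the finite-energy space $\cE^1(X,\omega)$ of Guedj--Zeriahi by lower semi-continuity.

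Finally, the variational principle of \cite{BBGZ,BBEGZ19} produces a minimizer $\phi_\ast\in\cE^1(X,\omega)$ of $D_\eta$, whose Euler--Lagrange equation is the Monge--Amp\`ere equation $(\omega+\ddc\phi_\ast)^n=c\,e^{-\phi_\ast+f_\eta}\,d\mu_\theta$ for a positive constant $c$. The klt bound \eqref{eq:psi-L-p} forces the right-hand side to lie in $L^p$ for some $p>1$, so standard pluripotential regularity makes $\phi_\ast$ bounded (indeed continuous); then $\omega_{tKE}:=\omega+\ddc\phi_\ast$ is a K\"ahler current in $c_1(L)$, and applying $\ddc\log$ to both sides of the Monge--Amp\`ere equation delivers $\Ric(\omega_{tKE})=\omega_{tKE}+\eta+\theta$ as currents. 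The genuine difficulty of this whole chain lies not in the present theorem but in Theorem~\ref{thm:delta-delta-A}, where the quantization estimate (Proposition~\ref{prop:E<1-e-E-m}) carries the real weight; the only subtlety here is that the variational machinery must be run for an arbitrary ample $\RR$-line bundle and a merely klt current $\theta$, without assuming the log Fano relation $c_1(L)=c_1(X)-[\theta]$ or any smoothness of $\theta$.
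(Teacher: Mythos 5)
Your overall route coincides with the paper's: Theorem \ref{thm:delta-delta-A} gives $\delta^A(L;\theta)>1$, and the passage from this to a solution of \eqref{eq:tKE-general} is exactly Corollary \ref{cor:delta-A>1=>tKE}, namely coercivity of the twisted Ding functional followed by the variational existence theory of \cite{BBGZ,BBEGZ19}. The one genuine flaw is in your coercivity step. You claim the single Moser--Trudinger bound $\int_X e^{-\lambda(\phi-E(\phi))}\,d\mu_\theta<C_\lambda$ with $\lambda>1$ is ``the only analytic inequality needed'' and that ``a direct Jensen argument'' then yields $D_{\theta+\eta}(\phi)\geq\varepsilon J(\phi)-C$. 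That is not the case: from this bound alone, Jensen/H\"older only gives $-\log\int_X e^{-\phi}d\mu_\theta\geq E(\phi)-\tfrac{1}{\lambda}\log C_\lambda$, hence $D_{\theta+\eta}$ bounded below with \emph{no} positive multiple of $\sup\phi-E(\phi)$; applying the bound to $t\phi$ with $t\in(0,1]$ and using concavity of $E$ does no better. The fact that $\lambda>1$ cannot be converted into a coercivity gain without a second input normalized by $\sup\phi$ rather than $E(\phi)$.

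The missing ingredient is the positivity of the twisted $\alpha$-invariant \eqref{eq:def-alpha} (Tian's estimate \cite[Proposition 2.1]{Tian87} together with the klt bound \eqref{eq:psi-L-p}), which is precisely how Section \ref{sec:existence} of the paper (following \cite[Proposition 3.6]{Zha20}) argues: choosing $\alpha\in(0,\min\{1,\alpha(L;\theta)\})$ and writing $e^{-\phi}=e^{-a\lambda\phi}\,e^{-b\alpha\phi}$ with $a\lambda+b\alpha=1$, $a+b=1$, H\"older's inequality yields $D_{\theta+\eta}(\phi)\geq\tfrac{\alpha(\lambda-1)}{\lambda-\alpha}\big(\sup\phi-E(\phi)\big)-C$, which is the desired coercivity. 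With that correction inserted, the remainder of your argument (minimizer in $\cE^1(X,\omega)$, Euler--Lagrange equation, klt density in $L^p$, current-level identity $\Ric(\omega_{tKE})=\omega_{tKE}+\eta+\theta$) is the standard \cite{BBEGZ19} machinery the paper invokes, and your remark that neither the log Fano relation nor smoothness of $\theta$ is needed is indeed the point of the generality here.
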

% For some explicit examples of such tKE metrics, we refer the reader to \cite[Section 3]{ZhangZhou20}.

As mentioned in Introduction, the proof of Theorem \ref{thm:delta-delta-A} uses the quantization approach initiated in \cite{RTZ20}, which already implies one direction: $\delta^A(L;\theta)\leq\delta(L;\theta)$ when $L$ is an ample $\QQ$-line bundle. For completeness we will recall its proof in Section \ref{sec:proof}. For the other direction, $\delta^A(L;\theta)\geq\delta(L;\theta)$, we will crucially use a quantized maximum principle due to Berndtsson \cite{Bern09}, which enables us to bound $\delta^A$ from below using finite dimensional data, hence the result. The general case of $\RR$-line bundle then follows by invoking the continuity of $\delta$ and $\delta^A$ in the ample cone (cf. \cite{Zha20}). At the end of this article we will briefly explain how to generalize our approach to the coupled soliton case considered in \cite{RTZ20}.

In fact we expect that our approach can be generalized to the case of big line bundles, yielding new existence results for the general Monge--Amp\`ere equations considered in \cite{BEGZ10}, and answering some questions proposed in \cite[Section 6.3]{Zha20}. Another direction to pursue would be to consider the the case of singular varieties (as in \cite{LTW17,LTW19}) or the equivariant case (as in \cite{Li19-G-uniform}).

Now take $\theta=0$, in which case we will drop $\theta$ from our notation. Then 
Theorem \ref{thm:delta-delta-A} has the following interesting application, yielding a new criterion for the existence of cscK metrics. This also answers \cite[Question 6.13]{Zha20}.

\begin{theorem}
\label{thm:cscK}
Let $L$ be an ample $\RR$-line bundle.
Assume that $K_X+\delta(L)L$ is ample and $\delta(L)>n\mu(L)-(n-1)s(L)$, where $\mu(L):=\frac{-K_X\cdot L^{n-1}}{L^n}$ and $s(L):=\sup\{s\in\RR|-K_X-sL>0\}$. Then $X$ admits a unique constant scalar curvature K\"ahler (cscK) metric in $c_1(L)$.
\end{theorem}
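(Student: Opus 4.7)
The plan is to verify that the Mabuchi K-energy $M$ on $\cH(X,\omega)$ is coercive (proper modulo $\mathrm{Aut}(X)^{\circ}$) under the stated hypothesis; the existence of a cscK metric in $c_1(L)$ will then follow from the Chen--Cheng regularity theorem, and uniqueness modulo $\mathrm{Aut}(X)^{\circ}$ from Berman--Berndtsson. Concretely, one aims at the estimate $M(\phi)\geq\epsilon\,J(\phi)-C$ for all $\phi\in\cH(X,\omega)$ with $\sup\phi=0$.

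First I would apply the Chen--Tian decomposition
\[
M(\phi)=H(\phi)+\bar S\cdot E(\phi)-n\,\cE^{\Ric(\omega)}(\phi),\qquad\bar S=n\mu(L),
\]
where $H(\phi)=V^{-1}\int_X\log(\omega_\phi^n/\omega^n)\,\omega_\phi^n$ is the Monge--Amp\`ere entropy and $\cE^\chi(\phi)=\frac{1}{nV}\sum_{i=0}^{n-1}\int_X\phi\,\chi\wedge\omega^{n-1-i}\wedge\omega_\phi^i$. By definition of $s(L)$, for any $\sigma<s(L)$ the class $c_1(X)-\sigma c_1(L)$ is ample, so one may fix a K\"ahler representative $\eta_\sigma>0$ and a smooth $u_\sigma$ with $\Ric(\omega)=\sigma\omega+\eta_\sigma+\ddc u_\sigma$. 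Substituting this into Chen--Tian and using the identity $n\cE^\omega=(n+1)E-\bar\phi_\ast$ (with $\bar\phi_\ast:=V^{-1}\int_X\phi\,\omega_\phi^n$) together with the uniform bound $\cE^{\ddc u_\sigma}=O(1)$, one arrives at an expression for $M(\phi)$ involving only $H$, $E$, $\bar\phi_\ast$, and $\cE^{\eta_\sigma}$.

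Next, I would control the entropy using Theorem \ref{thm:delta-delta-A}: taking $\theta=0$, one has $\delta^A(L)=\delta(L)$, so for every $\lambda<\delta(L)$ the Moser--Trudinger inequality \eqref{eq:def-delta-A} holds. Applying Jensen's inequality (Legendre duality) against $\omega^n/V$ with the test function $-\lambda(\phi-E(\phi))$ yields
\[
H(\phi)\geq\lambda\,(I-J)(\phi)-C_\lambda.
\]
The twist $\cE^{\eta_\sigma}$ would be estimated using Berndtsson's geodesic concavity of $\cE^{\eta_\sigma}$ for $\eta_\sigma\geq 0$, combined with iterated integration by parts along the $n$ summands $\int\phi\,\eta_\sigma\wedge\omega^{n-1-i}\wedge\omega_\phi^i$. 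The crucial observation is that only $n-1$ of these $n$ integrals involve a positive power of $\omega_\phi$ (hence a nontrivial $\ddc\phi$ that, upon integration by parts, yields a non-negative $d\phi\wedge d^c\phi$ contribution); the $i=0$ term is purely linear in $\phi$. This is what forces the sharp coefficient $(n-1)$ — rather than $n$ — to appear in front of $s(L)$ in the end.

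Assembling the estimates, letting $\lambda\to\delta(L)^-$ and $\sigma\to s(L)^-$, the dominant coefficient of $J(\phi)$ in the resulting lower bound on $M(\phi)$ converges to $\delta(L)-n\mu(L)+(n-1)s(L)$, which is strictly positive by hypothesis. The ampleness of $K_X+\delta(L)L$ enters precisely to guarantee $\delta(L)>s(L)$, which is needed to keep all intermediate coefficients (in particular, to ensure $\lambda>\sigma$ is admissible) with the correct signs. This gives the coercivity estimate and completes the proof.

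\textbf{Main obstacle.} The delicate step is extracting the sharp coefficient $(n-1)$ in the twist estimate for $\cE^{\eta_\sigma}$: any crude estimate treating all $n$ mixed integrals symmetrically would produce a factor $n$ instead of $n-1$, leading to the strictly stronger (and in some cases false) criterion $\delta(L)>n(\mu(L)-s(L))+s(L)$. The mechanism producing the correct factor $(n-1)$ is the asymmetry between the single pure-$\omega$ term and the $n-1$ mixed terms in the summation defining $\cE^{\eta_\sigma}$, a phenomenon analogous to the $(n-1)$-versus-$n$ discrepancy familiar from Song--Weinkove's $J$-flow criteria. Getting the bookkeeping on this coefficient right, together with carefully handling the $\bar\phi$-linear remainders under the normalization $\sup\phi=0$ via Tian's inequality, constitutes the technical core of the argument.
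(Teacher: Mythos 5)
Your overall architecture---coercivity of the K-energy from the entropy threshold via $\delta^A(L)=\delta(L)$, then Chen--Cheng \cite{CC18} for existence and Berman--Berndtsson \cite{BerBern17} for uniqueness---is the same as the paper's, whose proof of Theorem \ref{thm:cscK} is simply Theorem \ref{thm:delta-delta-A} combined with Corollary \ref{cor:delta-cscK}, i.e.\ with \cite[Corollary 6.12]{Zha20} (the entropy step being Proposition \ref{prop:delta=entropy}). So you are in effect re-deriving the cited corollary, and the re-derivation has a genuine gap at exactly the step you flag as the ``technical core'': the estimate of the twisted term $\cE^{\eta_\sigma}$ with the sharp coefficient $n-1$. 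The mechanism you propose (``only $n-1$ of the $n$ mixed integrals involve a positive power of $\omega_\phi$, the $i=0$ term is purely linear in $\phi$'') does not produce the required bound. Every summand $\int_X\phi\,\eta_\sigma\wedge\omega^{n-1-i}\wedge\omega_\phi^{i}$ is linear in $\phi$ in the same sense; under the normalization $\sup\phi=0$ the summand containing no $\omega_\phi$ is in fact the harmless one (it is bounded, since $\frac1V\int_X(-\phi)\,\omega^n=O(1)$ by the Green's function estimate), while the dangerous terms are those with high powers of $\omega_\phi$, which must be compared with $\frac1V\int_X(-\phi)\,\omega_\phi^n$. No pointwise inequality of the form $\eta_\sigma\wedge\omega_\phi^{n-1}\geq c\,\omega_\phi^{n}$ is available, and soft devices (geodesic concavity, integration by parts, $I$--$J$ comparisons) only give lower bounds for $\mathcal{J}(\phi)$ of the shape $-[n^2(\mu-\sigma)+\sigma](I-J)(\phi)-C$, i.e.\ a criterion with constants like $n^2\mu-(n^2-1)s$ rather than $n\mu-(n-1)s$.

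What actually yields the sharp constant is a properness/lower-bound theorem for the $\mathcal{J}_\chi$-type energy with a \emph{K\"ahler} twist $\chi\in c_1(K_X+\delta(L)L)$ (Song--Weinkove/J-flow type input, as used in \cite[Section 6.2]{Zha20} and in Dervan's earlier criteria), valid under the cone condition that $n\frac{(K_X+\delta L)\cdot L^{n-1}}{L^n}L-(n-1)(K_X+\delta L)$ be ample; unwinding this class gives $(\delta-n\mu)L-(n-1)K_X>0$, i.e.\ exactly $\delta(L)>n\mu(L)-(n-1)s(L)$. This also shows that your reading of the hypothesis ``$K_X+\delta(L)L$ ample'' is off: it is not there merely to guarantee $\delta(L)>s(L)$ and fix signs, but is used essentially, first to provide the positive twist $\chi$ to which the $\mathcal{J}_\chi$-properness result applies, and then to ensure the situation has discrete automorphism group so that coercivity gives a genuinely unique cscK metric (your statement only gives uniqueness modulo $\mathrm{Aut}^\circ$, which is weaker than the theorem). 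The paper's acknowledgment that the statement of Theorem \ref{thm:cscK} had to be corrected on precisely this point is a further sign that the ampleness hypothesis cannot be demoted to bookkeeping. As written, the proposal's assembly step does not reach the stated threshold, so the key inequality $K(\phi)\geq\varepsilon(I-J)(\phi)-C$ remains unproved in your argument.
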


Recent progress made by Ahmadinezhad--Zhuang \cite{AZ20} shows that one can effectively compute $\delta$-invariants by induction and inversion of adjunction. So we expect that Theorem \ref{thm:cscK} can be applied to find more new examples of cscK manifolds. Also observe that the assumption in Theorem \ref{thm:cscK} is purely algebraic, so the author wonders if one can show uniform K-stability for $(X,L)$ under the same condition using only algebraic argument; see \cite{DerEv20} for related discussions.

\section{Existence of canonical metrics}
\label{sec:existence}

In this section we explain how is $\delta^A$ related to the canonical metrics in K\"ahler geometry, following \cite{Zha20}. The discussions below in fact hold for general K\"ahler classes as well. 

We begin by introducing a twisted version of the $\alpha$-invariant of Tian \cite{Tian87}.
Set
\begin{equation}
	\label{eq:def-alpha}
\alpha(L;\theta):=\sup\bigg\{\alpha>0\bigg|\exists C_\alpha>0\text{ s.t. }\int_Xe^{-\alpha(\phi-\sup\phi)}d\mu_\theta<C_\alpha\text{ for all }\phi\in\cH(X,\omega)\bigg\}.
\end{equation}

\begin{lemma}
	One always has $\alpha(L;\theta)>0$.
\end{lemma}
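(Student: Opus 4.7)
The plan is to reduce the statement to the classical positivity of the untwisted $\alpha$-invariant and then absorb the klt singularity $\psi$ via H\"older's inequality.

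First I would recall Tian's theorem (or equivalently the uniform Skoda/H\"ormander integrability for normalized $\omega$-psh functions): there exist constants $\alpha_0,C_0>0$, depending only on $(X,\omega)$, such that
\begin{equation*}
\int_X e^{-\alpha_0(\phi-\sup\phi)}\omega^n\leq C_0\qquad\text{for every }\phi\in\cH(X,\omega).
\end{equation*}
This is the untwisted statement $\alpha(L;0)>0$, and it follows from the $L^1$-compactness of normalized $\omega$-psh functions together with the local integrability of $e^{-\alpha u}$ for $u$ plurisubharmonic (in fact an instance of the Demailly--Koll\'ar semi-continuity result already cited in the introduction).

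Next I would feed in the klt hypothesis on $\theta$. By \eqref{eq:psi-L-p} there exist $p>1$ and $A_p>0$ such that $\int_X e^{-p\psi}\omega^n<A_p$. Letting $q$ be the conjugate exponent and applying H\"older's inequality to the splitting $e^{-\alpha(\phi-\sup\phi)}e^{-\psi}$,
\begin{equation*}
\int_X e^{-\alpha(\phi-\sup\phi)}d\mu_\theta
\leq\bigg(\int_X e^{-p\psi}\omega^n\bigg)^{1/p}\bigg(\int_X e^{-q\alpha(\phi-\sup\phi)}\omega^n\bigg)^{1/q}
\leq A_p^{1/p}\bigg(\int_X e^{-q\alpha(\phi-\sup\phi)}\omega^n\bigg)^{1/q}.
\end{equation*}

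Finally I would choose $\alpha>0$ small enough that $q\alpha<\alpha_0$. Then the second factor is bounded by $C_0^{1/q}$ uniformly in $\phi\in\cH(X,\omega)$, so the whole integral is controlled by a constant $C_\alpha$ independent of $\phi$. This exhibits an admissible $\alpha$ in the supremum defining $\alpha(L;\theta)$, proving $\alpha(L;\theta)\geq\alpha>0$.

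There is no real obstacle here: the classical uniform Skoda integrability supplies all the analytic content, and the klt condition is tailor-made to be eaten by H\"older against a power of $\psi$. The only mild care required is to note that $\sup\phi$ is invariant under adding constants to $\phi$, so the normalization is compatible with passing between the two integrals.
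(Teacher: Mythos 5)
Your proposal is correct and follows essentially the same route as the paper, which likewise combines Tian's uniform integrability result \cite[Proposition 2.1]{Tian87} with the klt bound \eqref{eq:psi-L-p} via H\"older's inequality; you have simply spelled out the details that the paper leaves implicit.
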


\begin{proof}
Using H\"older's inequality, the assertion follows from
	\cite[Proposition 2.1]{Tian87} and \eqref{eq:psi-L-p}.
\end{proof}

As a consequence, one also has
$$
\delta^A(L;\theta)>0
$$
since $E(\phi)\leq\sup\phi$.
Note that $\alpha(L;\theta)$ will be used several times in this article, as it can effectively control bad terms when doing integration.
\subsection{Twisted Ding functional}
In this part we relate $\delta^A$ to tKE metrics.
Pick any smooth representative $\eta\in c_1(X)-c_1(L)-[\theta]$.
Then we can find $f\in C^\infty(X,\RR)$ satisfying
$$
\Ric(\omega)=\omega+\eta+\theta_0+\ddc f,
$$
where we recall that $\theta_0\in[\theta]$ is the smooth representative we have fixed. Then the twisted Ding functional is defined by
$$
D_{\theta+\eta}(\phi):=-\log\int_Xe^{f-\phi}d\mu_\theta-E(\phi)\text{ for }\phi\in\cH(X,\omega).
$$
Actually one can extend $D_{\theta+\eta}(\cdot)$ to the larger space
$
\cE^1(X,\omega)
$
(see \cite{BBGZ} for the definition). Using variational argument,  a critical point  $\phi\in\cE^1(X,\omega)$  of $D_{\theta+\eta}(\cdot)$ will give rise to a solution to \eqref{eq:tKE-general} (see \cite[Section 4]{BBEGZ19}). A sufficient condition to guarantee the existence of such a critical point is called \emph{coercivity}, which we recall as follows.

\begin{definition}
	The twisted Ding functional $D_{\theta+\eta}(\cdot)$ is called coercive if there exist $\varepsilon>0$ and $C>0$ such that
	$$
	D_{\theta+\eta}(\phi)\geq\varepsilon(\sup\phi-E(\phi))-C\text{ for all }\phi\in\cH(X,\omega).
	$$
\end{definition}
%Note that by the work of Darvas \cite[Corollary 4.14]{Dar15}, $\sup\phi-E(\phi)$ is the $d_1$-distance from $0$ to $\phi$.
Using Demailly's regularization, the above definition is equivalent the coercivity investigated in \cite{BBEGZ19} and hence $D_{\theta+\eta}$ being coercive implies the existence of a solution to \eqref{eq:tKE-general} by \cite[Section 4]{BBEGZ19}.

\begin{proposition}
	If $\delta^A(L;\theta)>1$, then $D_{\theta+\eta}(\cdot)$ is coercive for any smooth representative $\eta\in c_1(X)-c_1(L)-[\theta]$.
\end{proposition}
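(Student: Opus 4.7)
The plan is to exploit the definition of $\delta^A(L;\theta)$ directly, combined with positivity of the $\alpha$-invariant established in the preceding lemma, via a single H\"older interpolation.

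Both quantities appearing in the required inequality are invariant under the shift $\phi \mapsto \phi + c$: indeed $D_{\theta+\eta}(\phi + c) = D_{\theta+\eta}(\phi)$ and $\sup(\phi+c) - E(\phi+c) = \sup\phi - E(\phi)$. Hence I would normalize $\phi$ so that $E(\phi) = 0$ and set $t := \sup\phi \ge 0$. Under this normalization $D_{\theta+\eta}(\phi) = -\log\int_X e^{f-\phi}\, d\mu_\theta$, so since $f$ is smooth on the compact manifold $X$ the coercivity reduces to proving a uniform estimate of the form
$$\int_X e^{-\phi}\, d\mu_\theta \le C\, e^{-\varepsilon t}$$
for some $\varepsilon, C > 0$ independent of $\phi \in \cH(X,\omega)$.

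The two inputs I would feed in are as follows. Since $\delta^A(L;\theta) > 1$ by hypothesis, I may choose $\lambda \in (1, \delta^A(L;\theta))$; the definition \eqref{eq:def-delta-A} then yields $\int_X e^{-\lambda\phi}\, d\mu_\theta \le C_\lambda$. Next I pick $\alpha \in (0, \min\{1, \alpha(L;\theta)\})$, using the positivity provided by the preceding lemma; then \eqref{eq:def-alpha} together with $\sup\phi = t$ gives $\int_X e^{-\alpha\phi}\, d\mu_\theta = e^{-\alpha t}\int_X e^{\alpha(\sup\phi - \phi)}\, d\mu_\theta \le C_\alpha e^{-\alpha t}$. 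Finally, I would apply H\"older's inequality with conjugate exponents $p,q$ chosen so that $\lambda/p + \alpha/q = 1$ and $1/p + 1/q = 1$; this system is solved by $1/p = (1-\alpha)/(\lambda-\alpha)$ and $1/q = (\lambda-1)/(\lambda-\alpha)$, both in $(0,1)$ since $\alpha < 1 < \lambda$. The interpolation produces
$$\int_X e^{-\phi}\, d\mu_\theta \le \left(\int_X e^{-\lambda\phi}\, d\mu_\theta\right)^{\!1/p}\!\left(\int_X e^{-\alpha\phi}\, d\mu_\theta\right)^{\!1/q} \le C_\lambda^{1/p} C_\alpha^{1/q}\, e^{-\alpha t/q},$$
so the required estimate holds with $\varepsilon = \alpha/q = \alpha(\lambda-1)/(\lambda-\alpha) > 0$.

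The only ``obstacle'' is the bookkeeping needed to check that $\varepsilon > 0$, which uses both $\lambda > 1$ (from the hypothesis) and $\alpha > 0$ (from the previous lemma). I note that attempting to use the $\delta^A$-bound alone gives at best $\int e^{-\phi}\, d\mu_\theta \le C_\lambda^{1/\lambda}$, which is uniformly bounded but lacks the decay $e^{-\varepsilon t}$; this forces the interpolation with $\alpha$ to extract $t$-dependence from the normalization $\sup\phi = t$. Extending the resulting coercivity from $\cH(X,\omega)$ to $\cE^1(X,\omega)$ is the standard Demailly regularization step already referenced in the paper, and I would not elaborate on it here.
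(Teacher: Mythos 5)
Your argument is correct and is essentially the paper's own proof: after absorbing the bounded term $f$, both interpolate $\int_X e^{-\phi}\,d\mu_\theta$ via H\"older between the $\delta^A$-bound at some $\lambda\in(1,\delta^A(L;\theta))$ and the $\alpha$-invariant bound at some $\alpha\in(0,\min\{1,\alpha(L;\theta)\})$, with the same weights $\tfrac{1-\alpha}{\lambda-\alpha}$, $\tfrac{\lambda-1}{\lambda-\alpha}$ and the same resulting coercivity constant $\varepsilon=\tfrac{\alpha(\lambda-1)}{\lambda-\alpha}$. The only cosmetic difference is your normalization $E(\phi)=0$ in place of the paper's carrying the energy terms through the inequality explicitly.
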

\begin{proof}
	This is already contained in \cite[Proposition 3.6]{Zha20} (which in fact says that the reverse direction is also true).
	It suffices to show that, for some $\varepsilon>0$ and $C>0$,
	$$
	-\log\int_Xe^{-\phi}d\mu_\theta-E(\phi)\geq\varepsilon(\sup\phi-E(\phi))-C\text{ for any }\phi\in\cH(X,\omega).
	$$
To see this, fix $\lambda\in(1,\delta^A(L;\theta))$ and $\alpha\in(0,\min\{1,\alpha(L;\theta)\})$. Then by H\"older's inequality,
\begin{equation*}
	\begin{aligned}
		-\log\int_Xe^{-\phi}d\mu_\theta-E(\phi) &\geq-\frac{1-\alpha}{\lambda-\alpha}\log\int_Xe^{-\lambda\phi}d\mu_\theta-\frac{\lambda-1}{\lambda-\alpha}\int_Xe^{-\alpha\phi}d\mu_\theta-E(\phi)\\
		&=-\frac{1-\alpha}{\lambda-\alpha}\log\int_Xe^{-\lambda(\phi-E(\phi))}d\mu_\theta-\frac{\lambda-1}{\lambda-\alpha}\int_Xe^{-\alpha(\phi-\sup\phi)}d\mu_\theta\\
		&\ \ \ \ \ +\frac{\alpha(\lambda-1)}{\lambda-\alpha}(\sup\phi-E(\phi)).
	\end{aligned}
\end{equation*}
Then the assertion follows from \eqref{eq:def-delta-A} and \eqref{eq:def-alpha}.
\end{proof}

\begin{corollary}
\label{cor:delta-A>1=>tKE}
	If $\delta^A(L;\theta)>1$, then there exists a solution to \eqref{eq:tKE-general} for any smooth representative $\eta\in c_1(X)-c_1(L)-[\theta]$.
\end{corollary}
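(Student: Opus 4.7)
This corollary is essentially an immediate consequence of the preceding proposition combined with the variational machinery of Berman--Boucksom--Eyssidieux--Guedj--Zeriahi \cite{BBEGZ19}, so my plan is to assemble these ingredients rather than to reprove anything. First I would apply the proposition just established to conclude that the twisted Ding functional $D_{\theta+\eta}(\cdot)$ is coercive on $\cH(X,\omega)$, in the sense that
\[
D_{\theta+\eta}(\phi)\geq \varepsilon\bigl(\sup\phi-E(\phi)\bigr)-C
\]
for some $\varepsilon,C>0$. This step is where the hypothesis $\delta^A(L;\theta)>1$ enters; it is the only place where it is used.

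Next I would extend this coercivity from $\cH(X,\omega)$ to the finite-energy class $\cE^1(X,\omega)$ via Demailly's regularization, as already indicated in the paragraph preceding the proposition. Concretely, any $\phi\in\cE^1(X,\omega)$ can be approximated by a decreasing sequence $\phi_j\in\cH(X,\omega)$ with $E(\phi_j)\to E(\phi)$ and $\sup\phi_j\to \sup\phi$, while the terms $\int_X e^{f-\phi_j}d\mu_\theta$ behave well enough (using the klt bound \eqref{eq:psi-L-p} together with monotone convergence) that $D_{\theta+\eta}(\phi_j)\to D_{\theta+\eta}(\phi)$. This upgrades the coercivity inequality to $\cE^1(X,\omega)$, which is precisely the framework in which \cite{BBEGZ19} operates.

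Having coercivity on $\cE^1(X,\omega)$, I would invoke the variational existence theorem of \cite[Section 4]{BBEGZ19}: the functional $D_{\theta+\eta}$ is lower semicontinuous in the strong topology on $\cE^1$, and coercivity together with the compactness of sublevel sets of $\sup\phi-E(\phi)$ produces a minimizer $\phi_\infty\in\cE^1(X,\omega)$. Such a minimizer is a critical point of $D_{\theta+\eta}$, and the Euler--Lagrange analysis in \cite{BBEGZ19} identifies critical points precisely with solutions of the twisted complex Monge--Amp\`ere equation
\[
\omega_{\phi_\infty}^n = V\,e^{f-\phi_\infty}\,d\mu_\theta,
\]
which, after applying $\Ric(\cdot)=-\ddc\log\det(\cdot)$ together with the definition of $f$, is equivalent to $\Ric(\omega_{tKE})=\omega_{tKE}+\eta+\theta$ for $\omega_{tKE}=\omega+\ddc\phi_\infty$. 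The klt assumption on $\theta$ guarantees that $\omega_{tKE}$ is a genuine K\"ahler current with the prescribed singularities.

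The only step that requires any care is the passage from $\cH(X,\omega)$ to $\cE^1(X,\omega)$ in the coercivity statement, but since this is carried out explicitly in \cite{BBEGZ19} under exactly our hypotheses, no genuine obstacle arises; the whole argument is a citation-level assembly.
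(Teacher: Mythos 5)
Your proposal is correct and follows essentially the same route as the paper: the corollary is obtained by combining the coercivity proposition with the remark that, via Demailly regularization, this coercivity matches the one in \cite{BBEGZ19}, whose variational results in Section 4 then produce the twisted K\"ahler--Einstein solution. Your extra detail on extending coercivity to $\cE^1(X,\omega)$ and on the Euler--Lagrange identification is exactly the citation-level assembly the paper intends.
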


\subsection{K-energy and constant scalar curvature metric}
In this part we relate $\delta^A$ to cscK metrics.
For simplicity assume $\theta=0$, and hence $\theta$ will be abbreviated in our notation. Let us first recall several functionals. For $\phi\in\cH(X,\omega)$, define
\begin{itemize}
	\item $I$-functional:
	$
	I(\phi):=\frac{1}{V}\int_X\phi(\omega^n-\omega_\phi^n);
	$
	\item $J$-functional:
	$
	J(\phi):=\frac{1}{V}\int_X\phi\omega^n-E(\phi);	$
	\item Entropy:
	$
H(\phi):=\frac{1}{V}\int_X\log\frac{\omega_\phi^n}{\omega^n}\omega^n_\phi;
$
	\item $\cJ$-Energy:
$
\mathcal{J}(\phi):=n\frac{(-K_X)\cdot L^{n-1}}{L^n}E(\phi)-\frac{1}{V}\int_X\phi\Ric(\omega)\wedge\sum_{i=0}^{n-1}\omega^i\wedge\omega^{n-1-i}_\phi;
$
\item K-energy:
$
K(\phi):=H(\phi)+\mathcal{J}(\phi).
$
\end{itemize}
A K\"ahler metric $\omega_\phi\in c_1(L)$ is a cscK metric if and only if $\phi$ is a critical point of the K-energy (cf. \cite{Mabuchi}).
The following result says that $\delta^A(L)$ is the coercivity threshold of $H(\phi)$.
\begin{proposition}
	\cite[Proposition 3.5]{Zha20}
	\label{prop:delta=entropy}
	We have
$$
\delta^A(L)=\sup\bigg\{\lambda>0\bigg|
    \exists\ C_\lambda>0\text{ s.t. }
    H(\phi)\geq\lambda(I-J)(\phi)-C_\lambda\text{ for all }
    \phi\in\mathcal{H}(X,\omega)
    \bigg\}.
$$
\end{proposition}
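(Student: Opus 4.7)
The plan is to establish both inequalities via the Legendre duality between the relative entropy and the logarithmic moment generating function, which in our setting reads
$$
H(\psi) \;\geq\; \frac{1}{V}\int_X f\,\omega_\psi^n \;-\; \log\frac{1}{V}\int_X e^f\,\omega^n
$$
for every bounded measurable $f$, with equality when $e^f$ is a positive scalar multiple of $\omega_\psi^n/\omega^n$. Unpacking the definitions of $I$, $J$, $E$ yields the algebraic identity $(I-J)(\psi)=E(\psi)-\frac{1}{V}\int_X\psi\,\omega_\psi^n$, which is what makes this duality interface cleanly with the two sides of the proposition.

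For the direction $\delta^A(L)\leq\text{RHS}$, I fix $\lambda<\delta^A(L)$ with the associated constant $C_\lambda$ from \eqref{eq:def-delta-A}, and plug $f=-\lambda(\psi-E(\psi))$ into the duality. Using $\int_X\omega_\psi^n=V$ together with the identity above, the first term $\frac{1}{V}\int_X f\,\omega_\psi^n$ rewrites as $\lambda(I-J)(\psi)$, while the second term is bounded above by $\log C_\lambda$ directly from the definition of $\delta^A$. This yields $H(\psi)\geq\lambda(I-J)(\psi)-\log C_\lambda$ for all $\psi\in\cH(X,\omega)$, so $\lambda$ is admissible on the right-hand side.

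For the reverse inequality, starting from $H(\psi)\geq\lambda(I-J)(\psi)-C_\lambda$ and given an arbitrary $\phi\in\cH(X,\omega)$, I invoke Yau's solution of the Monge--Amp\`ere equation to produce $\psi\in\cH(X,\omega)$ with $\omega_\psi^n = Z^{-1}\,e^{-\lambda(\phi-E(\phi))}\,\omega^n$, where $Z := \frac{1}{V}\int_X e^{-\lambda(\phi-E(\phi))}\,\omega^n$ is precisely the integral that $\delta^A(L)$ asks us to bound. Taking logarithms and integrating against $\omega_\psi^n/V$ yields the closed-form entropy $H(\psi)=-\lambda\cdot\frac{1}{V}\int_X\phi\,\omega_\psi^n+\lambda E(\phi)-\log Z$. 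Substituting this into the coercivity bound and rearranging gives $\log Z \leq -\lambda\bigl[E(\psi)-E(\phi)+\frac{1}{V}\int_X(\phi-\psi)\omega_\psi^n\bigr]+C_\lambda$, and the bracketed quantity is nonnegative by concavity of the Monge--Amp\`ere energy (i.e.\ $E(\phi)-E(\psi)\leq\frac{1}{V}\int_X(\phi-\psi)\omega_\psi^n$). Hence $\log Z\leq C_\lambda$, which exhibits $\lambda$ as admissible in \eqref{eq:def-delta-A} and gives $\lambda\leq\delta^A(L)$; taking the supremum over such $\lambda$ closes the loop.

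I do not expect a real obstacle: the forward direction is Jensen's inequality in disguise once the right test function is chosen, and the reverse direction saturates the same Jensen inequality via Yau's theorem, with smoothness of the density $e^{-\lambda(\phi-E(\phi))}\omega^n$ automatic from $\phi\in\cH(X,\omega)$. No convexity of the Ding or K-energy and no non-Archimedean machinery enters. The same template extends transparently to nonzero klt $\theta$ by carrying $d\mu_\theta$ in place of $\omega^n/V$ throughout and, where needed, invoking Ko\l odziej--Yau-type weak solutions to the Monge--Amp\`ere equation in $\cE^1(X,\omega)$.
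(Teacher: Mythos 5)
Your argument is correct. Both directions check out: the identity $(I-J)(\psi)=E(\psi)-\frac1V\int_X\psi\,\omega_\psi^n$ is right; the Donsker--Varadhan/Jensen duality with $f=-\lambda(\psi-E(\psi))$ gives $H(\psi)\geq\lambda(I-J)(\psi)-\log C_\lambda$ for every $\lambda$ strictly below $\delta^A(L)$ (the only implicit step, that every such $\lambda$ admits a constant $C_\lambda$, follows from H\"older, as in the paper's own usage of \eqref{eq:def-delta-A}); and in the converse direction the Yau solution of $\omega_\psi^n=Z^{-1}e^{-\lambda(\phi-E(\phi))}\omega^n$ together with the concavity inequality $E(\phi)-E(\psi)\leq\frac1V\int_X(\phi-\psi)\,\omega_\psi^n$ yields $\log Z\leq C_\lambda$ uniformly in $\phi$. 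Note that the present paper does not prove this proposition at all; it quotes it from \cite[Proposition 3.5]{Zha20}, and your proof via the Legendre duality of relative entropy plus the Monge--Amp\`ere equation is exactly the standard route taken there, so this is a faithful self-contained reconstruction rather than a genuinely different method.
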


Now let $\mu(L):=\frac{-K_X\cdot L^{n-1}}{L^n}$ denote the slope and $s(L):=\sup\{s\in\RR|-K_X-sL>0\}$ the nef threshold. As explained in \cite[Section 6.2]{Zha20},
if $K_X+\delta^A(L)L$ is ample and $\delta^A(L)+(n-1)s(L)-n\mu(L)>0$, then for some $\varepsilon>0$ and $C_\varepsilon>0$,
$$
K(\phi)\geq\varepsilon(I-J)(\phi)-C_\varepsilon\text{ for all }
\phi\in\cH(X,\omega),
$$
meaning that the K-energy is coercive. So by Chen--Cheng \cite[Theorem 4.1]{CC18}, there exists a cscK metric in $c_1(L)$. Moreover by \cite[Theorem 1.3]{BerBern17} such a metric is unique as in this case the automorphism group must be discrete.
As a consequence, we have the following
  \begin{corollary}\cite[Corollary 6.12]{Zha20}
  \label{cor:delta-cscK}
Assume that $K_X+\delta^A(L) L$ is ample and
 $
 \delta^A(L)>n\mu(L)-(n-1)s(L),
 $
 then there exists a unique cscK metric in $c_1(L)$.
 \end{corollary}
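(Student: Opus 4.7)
The plan is to reduce the existence of a cscK metric in $c_1(L)$ to coercivity of the Mabuchi K-energy $K=H+\cJ$ on $\cH(X,\omega)$. Once this coercivity is established, Chen--Cheng \cite{CC18} produces a smooth cscK minimiser, and Berman--Berndtsson \cite{BerBern17} gives its uniqueness; the latter requires only that $\mathrm{Aut}_0(X,L)$ be discrete, which is forced by Matsushima's theorem from the ampleness of $K_X+\delta^A(L)L$.

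To establish the coercivity, I would estimate the two summands of $K$ separately. The entropy piece $H$ is handled directly by Proposition \ref{prop:delta=entropy}: for every $\lambda<\delta^A(L)$ there exists $C_\lambda$ with $H(\phi)\geq\lambda(I-J)(\phi)-C_\lambda$. The twisted-energy piece $\cJ(\phi)$ depends only on the class $c_1(X)$, and the goal there is the purely cohomological lower bound
$$
\cJ(\phi)\geq-\bigl(n\mu(L)-(n-1)s(L)\bigr)(I-J)(\phi)-C'
\quad\text{on }\cH(X,\omega).
$$
Granting this, one picks $\lambda$ in the interval $\bigl(n\mu(L)-(n-1)s(L),\,\delta^A(L)\bigr)$ (nonempty by hypothesis) and adds the two estimates to get $K(\phi)\geq\varepsilon(I-J)(\phi)-C$ with $\varepsilon=\lambda-n\mu(L)+(n-1)s(L)>0$.

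For the estimate on $\cJ$, I would use the nef-threshold assumption to decompose the Ricci form. For each $s$ slightly below $s(L)$ the class $c_1(-K_X-sL)$ is ample, so one can pick a smooth strictly positive representative $\rho$ and write $\Ric(\omega)=s\omega+\rho+\ddc u$ with $u\in C^\infty(X)$. Inserting this decomposition into the defining formula of $\cJ$ and using the algebraic identity
$$
\frac{1}{V}\int_X\phi\,\omega\wedge\textstyle\sum_{i=0}^{n-1}\omega^i\wedge\omega_\phi^{n-1-i}=nE(\phi)+(I-J)(\phi),
$$
together with its slope-analogue for $\rho$ (which contributes a leading term $n(\mu(L)-s)E(\phi)$ plus an error of the form $O((I-J)(\phi)+1)$), the semipositivity of the mixed forms $\rho\wedge\omega^i\wedge\omega_\phi^{n-1-i}$ (which forces the $\rho$-contribution to have the correct sign once $\phi$ is normalized with zero supremum), and integration by parts to absorb the $\ddc u$ term into a bounded constant, one extracts the required lower bound. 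The coefficient $(n-1)$ in front of $s(L)$, rather than $n$, is exactly the one factor of $\omega$ absorbed by $E(\phi)$ via the identity above, and letting $s\to s(L)^-$ gives the sharp value $n\mu(L)-(n-1)s(L)$.

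The principal obstacle is the mixed Monge--Amp\`ere bookkeeping in this last step: one must verify that the deviation $\cJ_\rho(\phi)-n\bigl([\rho]\!\cdot\! L^{n-1}/L^n\bigr)E(\phi)$ is controlled by a uniform multiple of $(I-J)(\phi)$ with an explicit constant, and that the telescoping of the various contributions produces precisely the coefficient $-\bigl(n\mu(L)-(n-1)s(L)\bigr)$; the ampleness of $K_X+\delta^A(L)L$ enters only to guarantee that the resulting $\varepsilon$ is strictly positive. Once this combinatorial identity disguised as an inequality is in place, the K-energy is coercive and the conclusion follows from \cite{CC18,BerBern17}.
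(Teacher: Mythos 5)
Your overall skeleton coincides with the paper's: coercivity of $K=H+\cJ$ via Proposition \ref{prop:delta=entropy}, then existence from Chen--Cheng \cite{CC18} and uniqueness from \cite{BerBern17}. The gap is in the middle step. You assert the ``purely cohomological'' bound $\cJ(\phi)\geq-\bigl(n\mu(L)-(n-1)s(L)\bigr)(I-J)(\phi)-C'$ on all of $\cH(X,\omega)$, with the ampleness of $K_X+\delta^A(L)L$ ``entering only to guarantee that $\varepsilon$ is strictly positive''. The manipulations you list do not yield that constant. Writing $\Ric(\omega)=s\omega+\rho+\ddc u$ with $\rho>0$, normalizing $\sup\phi=0$, using the identity $\frac1V\int_X\phi\,\omega\wedge\sum_{i=0}^{n-1}\omega^i\wedge\omega_\phi^{n-1-i}=nE(\phi)+(I-J)(\phi)$, absorbing $\ddc u$ by parts, and discarding the $\rho$-term by its sign gives only $\cJ(\phi)\geq n(\mu-s)E(\phi)-s(I-J)(\phi)-C$; since the only general control on $-E$ under $\sup\phi=0$ is $-E\leq J+C\leq n(I-J)+C$, this route produces the constant $n^{2}(\mu-s)+s$, not $n\mu-(n-1)s$. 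To keep the sharp $(n-1)$ you would need the $\rho$-contribution to track its cohomological slope, i.e. a lower bound of the shape $\frac1V\int_X(-\phi)\,\rho\wedge\sum_{i}\omega^i\wedge\omega_\phi^{n-1-i}\geq n(\mu-s)(-E(\phi))-c\,(I-J)(\phi)-C$ with small $c$, and no such estimate follows from semipositivity of the mixed forms: the terms with many $\omega_\phi$-factors are precisely the ones that cannot be bounded by top intersection numbers (this is the J-equation phenomenon, whose lower-boundedness criteria involve all subvarieties, not just slopes), so your claimed unconditional inequality is unsubstantiated and very likely false in this generality.

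What the paper actually relies on (via \cite[Section 6.2]{Zha20}) is different in exactly this step: for $\lambda$ slightly below $\delta^A(L)$ one rewrites $\lambda(I-J)(\phi)+\cJ(\phi)$ as the $\chi$-twisted J-functional $\frac1V\int_X\phi\,\chi\wedge\sum_{i=0}^{n-1}\omega^i\wedge\omega_\phi^{n-1-i}-n\frac{[\chi]\cdot L^{n-1}}{L^n}E(\phi)$ with $[\chi]=c_1(K_X+\lambda L)$. The ampleness of $K_X+\delta^A(L)L$ is used at the outset to choose a K\"ahler representative $\chi$, and the numerical hypothesis $\delta^A(L)>n\mu(L)-(n-1)s(L)$ is exactly what makes the class $n\frac{[\chi]\cdot L^{n-1}}{L^n}L-(n-1)[\chi]$ ample, so that the known lower-bound/coercivity results for such twisted J-functionals (Song--Weinkove/Chen type, as invoked in \cite{Zha20}) apply. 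Thus the ampleness hypothesis is structural to the coercivity argument, not cosmetic, and the coefficient $(n-1)$ comes from the J-positivity condition rather than from bookkeeping. (Minor point: discreteness of the automorphism group is deduced from coercivity, not from ``Matsushima plus ampleness''.) To complete your proof you should either reproduce the twisted-J argument of \cite[Section 6.2]{Zha20} or simply cite it, as the paper does.
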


\section{Quantization}
\label{sec:quantization}

We collect some necessary quantization techniques for the proof of our main theorem.
In this section we assume $L$ to be an ample line bundle over $X$. By rescaling $L$ we will assume further that $L$ is very ample.

Put
$$
R_m:=H^0(X,mL)\text{ and }d_m:=\dim R_m.
$$
As in Section \ref{sec:setup-results}, fix a smooth positively curved Hermitian metric $h$ on $L$ with $\omega:=-\ddc\log h$.

\subsection{Bergman space}
Note that there is a natural Hermitian inner product
$$
H_m:=\int_Xh^m(\cdot,\cdot)\omega^n
$$
on $R_m$ induced by $h$. More generally, for any bounded function $\phi$ on $X$, we may consider
$$
H_m^\phi:=\int_X(he^{-\phi})^m(\cdot,\cdot)\omega^n.
$$
So in particular, $H_m=H^0_m$.

Now put
$$
\cP_m(X,L):=\bigg\{\text{Hermitian inner product on } R_m\bigg\}.
$$
and
$$
\cB_m(X,\omega):=\bigg\{\phi=\frac{1}{m}\log\sum_{i=1}^{d_m}|\sigma_i|^2_{h^m}\bigg|\{\sigma_i\}\text{ is a basis of $R_m$}\bigg\}.
$$
The classical Fubini--Study map $FS:\cP_m(X,L)\rightarrow\cB_m(X,\omega)$ is a bijection, where $FS$ is defined by
$$
FS(H):=\frac{1}{m}\log\sum_{i=1}^{d_m}|\sigma_i|^2_{h^m}\text{ for }H\in\cP_m\text{ where }\{\sigma_i\}\text{ is any $H$-orthonormal basis}.
$$
In particular $\cB_m(X,\omega)\subseteq\cH(X,\omega)$ is a finite dimensional subspace (when identified with $\cP_m(X,L)\cong GL(d_m,\CC)/U(d_m)$).

For any $\phi\in\cH(X,\omega)$, we set for simplicity
$$
\phi^{(m)}:=FS(H^\phi_m).
$$
It then follows from the definition that
\begin{equation}
    \label{eq:int-m-phi--phi-m=d-m}
    \int_Xe^{m(\phi^{(m)}-\phi)}\omega^n=d_m\text{ for any }\phi\in\cH(X,\omega).
\end{equation}
This simple identity will be used in the proof of Theorem \ref{thm:delta-delta-A}.

Note that any two Hermitian inner products can be joined by the (unique) \emph{Bergman geodesic}. More specifically, given any two $H_{m,0}, H_{m,1}\in\cP_m(X,L)$, one can find an $H_{m,0}$-orthonormal basis under which $H_{m,1}=\mathrm{diag}(e^{\mu_1},...,e^{\mu_{d_m}})$ is diagonalized. Then the Bergman geodesic $H_t$ takes the form
$$
H_{m,t}:=\mathrm{diag}(e^{\mu_1t},...,e^{\mu_{d_m}t}).
$$

\subsection{Quantized $\delta$-invariant}

Now as in \cite{RTZ20}, we consider the following \emph{quantized Monge--Amp\`ere energy}:
$$
E_m(\phi):=\frac{1}{md_m}\log\frac{\det H_m}{\det FS^{-1}(\phi)} \text{ for }\phi\in\cB_m(X,\omega).
$$
In the literature this is also known as (up to a sign) Donaldson's $\cL_m$-functional (cf. \cite{D05}).
Observe that $E_m(FS(\cdot))$ is linear along any Bergman geodesics emanating from $H_m$. So in particular
\begin{equation}
	\label{eq:E_m=d/dt}
	E_m(FS(H_{m,1}))=\frac{d}{dt}\bigg|_{t=0}E_m(FS(H_{m,t}))
\end{equation}
for any Bergman geodesic $[0,1]\ni t\mapsto H_{m,t}$ with $H_{m,0}=H_m.$

Put
\begin{equation}
    \label{eq:def-delta-m}
    \delta_m(L;\theta):=\sup\bigg\{\lambda>0\bigg|\exists C_\lambda>0\text{ s.t. }\int_Xe^{-\lambda(\phi-E_m(\phi))}d\mu_\theta<C_\lambda\text{ for any }\phi\in\cB_m\bigg\}.
\end{equation}
By our previous work \cite[Theorem B.3]{RTZ20} (whose proof requires the estimate of Demailly--Koll\'ar \cite{DK01}), this coincides with the original basis divisor formulation of Fujita--Odaka \cite{FO18}. Moreover, by \cite[Theorem A]{BJ17} and \cite[Theorem 7.3]{BBJ18} the limit of $\delta_m(L;\theta)$ exists and one has
\begin{equation}
    \label{eq:delta=lim-delta-m}
    \delta(L;\theta)=\lim_{m\rightarrow\infty}\delta_m(L;\theta).
\end{equation}
Note that $\delta_m(L;\theta)$ characterizes the coercivity of certain quantized Ding functional, whose critical points correspond to ``balanced metrics"; see \cite[Theorem B.7]{RTZ20} for a quantized version of Theorem \ref{thm:YTD}.

\subsection{Comparing $E$ with $E_m$}

Given any $\phi\in\cH(X,\omega)$, it has been known since the work of Donaldson that
$
E(\phi)=\lim_{m\rightarrow\infty}E_m(\phi^{(m)}).
$
But this convergence is not uniform when $\phi$ varies in $\cH(X,\omega)$, which is the main stumbling block in the quantization approach. To overcome this, we recall a quantized maximum principle due to Berndtsson \cite{Bern09}.

The setup goes as follows. For any ample line bundle $E$ over $X$, let $g$ be a smooth positively curved metric on $E$ with $\eta:=-\ddc\log g>0$ being its curvature form. Pick two elements $\phi_0,\phi_1\in\cH(X,\eta)$. It was shown by Chen \cite{Chen00} and more recently by Chu--Tosatti--Weinkove \cite{CTW17} that there always exists a \emph{$C^{1,1}$-geodesic} $\phi_t$ joining $\phi_0$ and $\phi_1$.
For the reader's convenience, we briefly recall the definition. Let $[0,1]\ni t\mapsto\phi_t$ be a family of functions on $[0,1]\times X$ with $C^{1,1}$ regularity up to the boundary. Let $S:=\{0<\mathrm{Re}\,s<1\}\subset\CC$ be the unit strip and let $\pi:S\times X\rightarrow X$ denote the projection to the second component. Then we say $\phi_t$ is a \emph{$C^{1,1}$-subgeodesic} if it satisfies
$
\pi^*\eta+\ddc_{S\times X}\phi_{\mathrm{ Re }\,s}\geq0.
$
We say it is a $C^{1,1}$-geodesic if it further satisfies the homogenous Monge--Amp\`ere equation:
$
\big(\pi^*\eta+\ddc_{S\times X}\phi_{\mathrm{ Re }\,s}\big)^{n+1}=0.
$

Now given any $C^{1,1}$ subgeodesic joining $\phi_0$ and $\phi_1$, one may consider
$$
Hilb^{\phi_t}:=\int_Xg(\cdot,\cdot)e^{-\phi_t},
$$
which is a family of Hermitian inner products on $H^0(X,E+K_X)$ joining $Hilb^{\phi_0}$ and $Hilb^{\phi_1}$. Note that we do not need any volume form in the above integral.
Then Berndtsson's quantized maximum principle says the following, which in fact holds for subgeodesics with much less regularity; see \cite[Proposition 2.12]{DLR19}.
%(see also the comparison principle \cite[Theorem 4.4] {Rochberg84} due to Rochberg that plays the essential role).
\begin{proposition}\cite[Proposition 3.1]{Bern09}
\label{prop:Berndtsson-max-principle}
	Let $[0,1]\ni t\mapsto H_t$ be the Bergman geodesic connecting $Hilb^{\phi_0}$ and $Hilb^{\phi_1}$. Then one has
	$$
	H_t\leq Hilb^{\phi_t} \text{ for }t\in[0,1].
	$$
\end{proposition}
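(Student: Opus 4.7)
The plan is to test the Hermitian inequality $H_t \le Hilb^{\phi_t}$ one vector at a time and reduce it to a scalar concavity/convexity comparison on the interval $[0,1]$. For each nonzero $u \in H^0(X, E+K_X)$, thought of as a constant holomorphic section of the trivial bundle $S \times H^0(X, E+K_X) \to S$, set
$$
F_u(t) \;:=\; \log\|u\|^2_{Hilb^{\phi_t}} \;-\; \log\|u\|^2_{H_t}.
$$
Both $Hilb^{\phi_t}$ and $H_t$ depend only on $t = \mathrm{Re}\,s$, so $F_u$ is a function of $t \in [0,1]$, and by construction $F_u(0) = F_u(1) = 0$. The whole proposition reduces to showing $F_u \ge 0$ on $[0,1]$ for every such $u$, because this is equivalent to $\|u\|^2_{H_t} \le \|u\|^2_{Hilb^{\phi_t}}$ for all $u$, i.e., to the Hermitian inequality $H_t \le Hilb^{\phi_t}$.

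The first (and deeper) ingredient is that $t \mapsto \log\|u\|^2_{Hilb^{\phi_t}}$ is concave. The subgeodesic hypothesis $\pi^*\eta + \ddc_{S\times X}\phi_{\mathrm{Re}\,s} \ge 0$ says exactly that $\phi_s$ defines a semipositively curved (possibly singular) metric on $\pi^*E$ over $S \times X$. Berndtsson's positivity of direct images, in the low-regularity form cited as \cite[Proposition 2.12]{DLR19}, then asserts that the family $\{Hilb^{\phi_s}\}_{s \in S}$ equips the trivial bundle $S \times H^0(X, E+K_X)$ with a (Nakano, hence Griffiths) semipositively curved Hermitian metric. Griffiths semipositivity is equivalent to the statement that $\log\|\sigma(s)\|^2_{Hilb^{\phi_s}}$ is plurisuperharmonic on $S$ for every local holomorphic section $\sigma$. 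Applied to the constant section $\sigma \equiv u$ and using that the norm depends only on $\mathrm{Re}\,s$, plurisuperharmonicity translates to concavity of $t \mapsto \log\|u\|^2_{Hilb^{\phi_t}}$ in $t$.

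The second ingredient is that $t \mapsto \log\|u\|^2_{H_t}$ is convex, which is elementary. Using the explicit description of the Bergman geodesic recalled earlier, pick a basis $\{e_i\}$ of $H^0(X, E+K_X)$ simultaneously diagonalising the endpoints, so that $H_t = \mathrm{diag}(e^{\mu_i t})$. Writing $u = \sum_i u_i e_i$,
$$
\log\|u\|^2_{H_t} \;=\; \log\!\Big(\sum_i |u_i|^2 e^{\mu_i t}\Big),
$$
which is convex in $t$ as a log-sum-exp of affine functions of $t$.

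Combining the two ingredients, $F_u$ is the difference of a concave and a convex function of $t$, hence concave on $[0,1]$, and $F_u(0) = F_u(1) = 0$ then forces $F_u \ge 0$ throughout $[0,1]$. This completes the proof. The one substantive input is Berndtsson's direct image positivity theorem together with its extension to subgeodesic data of only $C^{1,1}$ regularity; that is the step I would cite rather than re-derive, and the only point where I expect any real difficulty.
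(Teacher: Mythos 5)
Your reduction to testing $H_t\le Hilb^{\phi_t}$ one vector at a time, and the convexity of $t\mapsto\log\|u\|^2_{H_t}$ along the Bergman geodesic, are both fine, but your first ingredient rests on a false lemma, and this is a genuine gap. Griffiths semipositivity of a Hermitian holomorphic bundle is \emph{not} equivalent to (and does not imply) plurisuperharmonicity of $\log\|\sigma\|^2$ for holomorphic sections $\sigma$: the correct dictionary is that Griffiths \emph{negativity} is characterized by plurisubharmonicity of $\log\|\sigma\|^2$ for holomorphic sections, while Griffiths positivity only gives plurisubharmonicity of $\log\|\xi\|^2$ for holomorphic sections $\xi$ of the \emph{dual} bundle. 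For a constant section $u$ of a positively curved family one only gets $\partial_s\bar\partial_s\log\|u\|^2\ \ge\ -\langle\Theta_{s\bar s}u,u\rangle/\|u\|^2$, up to a nonnegative Cauchy--Schwarz defect coming from the connection form, and that defect can dominate. Concretely, the flat (hence Nakano semipositive) rank-two family $A_t=\mathrm{diag}(e^{-2t},e^{2t})$ applied to $u=(1,1)$ gives $\log\|u\|^2_{A_t}=\log(e^{-2t}+e^{2t})$, which is strictly \emph{convex} in $t$. So positivity of the direct image does not yield concavity of $t\mapsto\log\|u\|^2_{Hilb^{\phi_t}}$ in rank $\ge 2$ (in rank one your claim is exactly Berndtsson's complex Pr\'ekopa theorem, which is probably what misled you), and with it the ``concave minus convex, zero at the endpoints'' conclusion collapses.

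Note also that naively dualizing does not repair the argument: Berndtsson's positivity makes $t\mapsto\log\|\xi\|^2_{(Hilb^{\phi_t})^*}$ convex for a constant dual vector $\xi$, but $\log\|\xi\|^2_{H_t^*}=\log\sum_i|\xi_i|^2e^{-\mu_i t}$ is convex as well, so the difference again has no sign. The missing idea in the actual proof (the paper does not reprove this statement; it simply cites \cite{Bern09} and, for low regularity, \cite{DLR19}) is to test the positivity with a \emph{non-constant} holomorphic section of the dual bundle adapted to the Bergman geodesic: fixing $t_0$ and $\xi_0$, and working in a basis in which $H_t=\mathrm{diag}(e^{\mu_i t})$, take $\xi(s)=\sum_i\xi_{0,i}\,e^{\mu_i(s-t_0)/2}\,e_i^*$, so that $\|\xi(s)\|^2_{H_{\mathrm{Re}\,s}^*}\equiv\|\xi_0\|^2_{H_{t_0}^*}$ is constant. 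Since $\{Hilb^{\phi_s}\}$ is Griffiths semipositive, its dual is Griffiths seminegative, so $s\mapsto\log\|\xi(s)\|^2_{(Hilb^{\phi_{\mathrm{Re}\,s}})^*}$ is subharmonic and bounded on the strip, and on the boundary (where $Hilb^{\phi_t}=H_t$) it equals that constant; the maximum principle at $s=t_0$ then gives $\|\xi_0\|^2_{(Hilb^{\phi_{t_0}})^*}\le\|\xi_0\|^2_{H_{t_0}^*}$, i.e.\ $H_{t_0}\le Hilb^{\phi_{t_0}}$. Some device of this kind (or Berndtsson's original argument) is indispensable; as written, your proof does not go through.
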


We will now apply this result to the setting where $E:=mL-K_X$ and $g:=h^m\otimes\omega^n$. As a consequence, we obtain the following key estimate, which can be viewed as a weak version of the ``partial $C^0$ estimate''.

\begin{proposition}
\label{prop:E<1-e-E-m}
For any $\varepsilon\in(0,1)$, there exist $m_0=m_0(X,L,\omega,\varepsilon)\in\NN$ such that
$$
E(\phi)\leq E_m\big(((1-\varepsilon)\phi)^{(m)}\big)+\varepsilon\sup\phi
$$
for any $m\geq m_0$ and any $\phi\in\cH(X,\omega)$.
\end{proposition}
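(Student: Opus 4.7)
Both sides transform covariantly under $\phi\mapsto\phi+c$, so by translation invariance we may assume $\sup\phi=0$ and aim for $E(\phi)\leq E_m(((1-\varepsilon)\phi)^{(m)})$. Let $\phi_t$ denote the $C^{1,1}$ $\omega$-geodesic joining $0$ and $\phi$ (Chen \cite{Chen00}, Chu--Tosatti--Weinkove \cite{CTW17}), and set $u_t:=m(1-\varepsilon)\phi_t$.

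The key observation is that $u_t$ is a $C^{1,1}$ $\eta$-subgeodesic on the strip $S\times X$ (with $\eta=m\omega+\Ric(\omega)$), as seen from the decomposition
\[
\pi^*\eta+\ddc_{S\times X}u_t=m(1-\varepsilon)\bigl[\pi^*\omega+\ddc_{S\times X}\phi_t\bigr]+\pi^*\bigl[m\varepsilon\omega+\Ric(\omega)\bigr]:
\]
the first bracket is $\geq 0$ by the subgeodesic property of $\phi_t$, and the second pullback is $\geq 0$ once $m\geq m_0=m_0(X,L,\omega,\varepsilon)$ is large enough to make $m\varepsilon\omega+\Ric(\omega)\geq 0$. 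With endpoints $u_0=0$ and $u_1=m(1-\varepsilon)\phi$, Berndtsson's quantized maximum principle (Proposition \ref{prop:Berndtsson-max-principle}, extended via \cite[Prop.~2.12]{DLR19} to this regularity) delivers $H_t\leq Hilb^{u_t}$ for all $t\in[0,1]$, where $H_t$ is the Bergman geodesic from $H_m$ to $H_m^{(1-\varepsilon)\phi}$.

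Setting $f(t):=\log\det Hilb^{u_t}-\log\det H_t$ one has $f\geq 0$ with $f(0)=0$, hence $f'(0^+)\geq 0$. Linearity of the Bergman geodesic yields $\tfrac{d}{dt}|_{0}\log\det H_t=-md_m E_m(((1-\varepsilon)\phi)^{(m)})$, while a direct computation in an $H_m$-orthonormal basis $\{\sigma_i\}$ gives $\tfrac{d}{dt}|_{0^+}\log\det Hilb^{u_t}=-m(1-\varepsilon)\int_X\rho_m\dot\phi_0\,\omega^n$, where $\rho_m:=\sum_i|\sigma_i|^2_{h^m}$. Hence
\[
md_m\,E_m(((1-\varepsilon)\phi)^{(m)})\geq m(1-\varepsilon)\int_X\rho_m\dot\phi_0\,\omega^n.
\]
Two classical inputs close the estimate: linearity of $E$ along the $\omega$-geodesic gives $\int_X\dot\phi_0\,\omega^n=V E(\phi)$, and convexity of $t\mapsto\phi_t(x)$ with $\sup\phi=0$ forces $\dot\phi_0\leq 0$, whence $\int_X|\dot\phi_0|\omega^n=V|E(\phi)|$; Tian's uniform Bergman kernel expansion \cite{Tian89} gives $\|\rho_m/d_m-1/V\|_\infty=O(1/m)$ with a constant depending only on $(X,L,\omega)$. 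These combine to
\[
E_m(((1-\varepsilon)\phi)^{(m)})\geq (1-\varepsilon)\bigl(1+O(1/m)\bigr)E(\phi),
\]
and since $E(\phi)\leq 0$, enlarging $m_0$ so that $(1-\varepsilon)(1+O(1/m))\leq 1$ yields $E_m(((1-\varepsilon)\phi)^{(m)})\geq E(\phi)$, as required.

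I expect the main obstacle to be the $\eta$-subgeodesic construction: the naive linear path $t\mapsto tm(1-\varepsilon)\phi$ is \emph{not} a subgeodesic on $S\times X$ (its mixed $(s,x)$-Hessian violates positivity), and $m\phi_t$ itself fails to be $\eta$-plurisubharmonic since $\Ric(\omega)$ need not be semipositive; it is precisely the $\varepsilon$-slack in $m\varepsilon\omega+\Ric(\omega)\geq 0$ that enables $m(1-\varepsilon)\phi_t$ to work. Uniformity of the Tian correction in $\phi$ is automatic because $\int_X|\dot\phi_0|\omega^n$ is pinned by $V|E(\phi)|$, so the error scales with $|E(\phi)|$ itself and is absorbed by shrinking the $(1-\varepsilon)$ factor.
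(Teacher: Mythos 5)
Your proof is correct and follows essentially the same route as the paper: scale the $C^{1,1}$ geodesic by $(1-\varepsilon)$ so that $m\varepsilon\omega+\Ric(\omega)\geq 0$ makes it a subgeodesic of metrics on $mL-K_X$, apply Berndtsson's quantized maximum principle, compare derivatives at $t=0^+$, and finish with linearity of $E$ along the geodesic, $\dot\phi_0\leq 0$, and Tian's uniform Bergman kernel asymptotics. The only cosmetic difference is that you phrase the derivative comparison via the nonnegative function $f(t)=\log\det Hilb^{u_t}-\log\det H_t$, while the paper invokes linearity of $E_m$ along the Bergman geodesic; these are the same mechanism.
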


\begin{proof}
Since the statement is translation invariant, we assume that $\sup\phi=0$.
Let $[0,1]\ni t\mapsto\phi_t$ be a $C^{1,1}$ geodesic connecting $0$ and $\phi$, with $\phi_0=0$ and $\phi_1=\phi$. The geodesic condition implies that $\phi_t$ is convex in $t$ so we have
$$
\dot\phi_0:=\frac{d}{dt}\bigg|_{t=0}\phi_t\leq0
$$
as $\phi\leq0$. 
Put
$
\tilde{\phi}_t:=(1-\varepsilon)\phi_t.
$
Observe that $(he^{-\tilde{\phi}_t})^m\otimes\omega^n$ gives rise to a family of Hermitian metrics on $mL-K_X$, which is in fact a $C^{1,1}$ subgeodesic whenever $m$ satisfies $m \varepsilon\omega\geq-\Ric(\omega)$.  Indeed, let $S:=\{0<\mathrm{Re}\,s<1\}\subset\CC$ be the unit strip and let $\pi:S\times X\rightarrow X$ denote the projection to the second component. Then $(he^{-\tilde{\phi}_{\mathrm{Re}\,s}})^m\otimes\omega^n$ induces a Hermitian metric on $\pi^*(mL-K_X)$ over $S\times X$ whose curvature form satisfies
$$
\pi^*(m\omega+\Ric(\omega))+m(1-\varepsilon)\ddc_{S\times X}\phi_{\mathrm{Re}\,s}\geq0
$$
whenever $m \varepsilon\omega\geq-\Ric(\omega)$.
It then follows from Proposition \ref{prop:Berndtsson-max-principle} that
$$
H_{m,t}\leq H_m^{\tilde{\phi}_t} \text{ for }t\in[0,1],
$$
where $[0,1]\ni t\mapsto H_{m,t}$ is the Bergman geodesic in $\cP_m(X,L)$ joining $H^0_m$ and $H^{(1-\varepsilon)\phi}_m$ with $H_{m,0}=H^0_m$ and $H_{m,1}=H^{(1-\varepsilon)\phi}_m$.
So we obtain that
$$
E_m(FS(H_{m,t}))\geq E_m(FS(H_m^{\tilde{\phi}_t}))\text{ for }t\in[0,1],
$$
with equality at $t=0,1$. Fixing an $H^0_m$-orthonormal basis $\{s_i\}$ of $R_m$, then by \eqref{eq:E_m=d/dt} we obtain that
\begin{equation*}
    \begin{aligned}
    E_m\big(((1-\varepsilon)\phi)^{(m)}\big)&=\frac{d}{dt}\bigg|_{t=0}E_m(FS(H_{m,t}))
    \geq\frac{d}{dt}\bigg|_{t=0}E_m(FS(H_{m}^{\tilde{\phi}_t}))
    =\frac{1-\varepsilon}{d_m}\int_X\dot\phi_0\bigg(\sum_{i=1}^{d_m}|s_i|^2_{h^m}\bigg)\omega^n,
    \end{aligned}
\end{equation*}
where the last equality is from a direct calculation using the definition of $E_m$. 
Now by the first order expansion of Bergman kernels going back to Tian \cite{Tian89} (with respect to the background metric $\omega$), one has
$$
\frac{\sum_{i=1}^{d_m}|s_i|^2_{h^m}}{d_m}\leq\frac{1}{(1-\varepsilon)V}
$$
for all $m\gg 1$. So we arrive at (recall $\dot\phi_0\leq0$)
$$
E_m\big(((1-\varepsilon)\phi)^{(m)}\big)\geq\frac{1}{V}\int_X\dot\phi_0\omega^n=E(\phi),
$$
where the last equality follows from the well-known fact that $E$ is linear along the geodesic $\phi_t$. This completes proof.
\end{proof}

\begin{remark}
\rm{
	After the appearance of this work on arXiv, the author was informed by Berndtsson that Proposition \ref{prop:E<1-e-E-m} also follows from the fact that $E_m(FS(H^{\tilde{\phi}_t}_m))$ is convex in t. And Berman kindly communicated to the author that, using Berndtsson's convexity, our estimate is essentially contained in \cite{BF14}; see in particular (3.4) in loc. cit. The author is very grateful to them for communications! But we need to emphasize that our proof here is slightly different, with a small advantage that it can be directly generalized to the weighted setting to treat soliton type metrics; see also Remark \ref{rmk:coupled-case}.}
\end{remark}

One can also bound $E$ from below in terms of $E_m$ on the Bergman space $\cB_m(X,\omega)$. This direction is already known; see \cite[Lemma 7.7]{BBGZ} or \cite[Lemma 5.2]{RTZ20}. We record it here for completeness. 
\begin{proposition}
\label{prop:E-m<E}
	For any $\varepsilon>0$, there exists $m_0=m_0(X,L,\omega,\varepsilon)\in\NN$ such that
	$$
	E_m(\phi)\leq(1-\varepsilon)E(\phi)+\varepsilon\sup\phi+\varepsilon.
	$$
	for any $m\geq m_0$ and $\phi\in\cB_m(X,\omega)$.
\end{proposition}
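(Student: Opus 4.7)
The plan is to reduce the desired inequality to a uniform Laplace-type asymptotic for $\int_X e^{m\phi}\omega^n$ in terms of $E(\phi)$, exploiting the finite-dimensional Fubini--Study structure of $\cB_m$.

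First I would use the translation invariance of both $E$ and $E_m$ (each satisfies $F(\phi+c)=F(\phi)+c$) to reduce to $\sup\phi=0$. Under this normalization $E(\phi)\le 0$, so $E(\phi)\le(1-\varepsilon)E(\phi)$, and the proposition follows once we establish the cleaner bound $E_m(\phi)\le E(\phi)+\varepsilon$ for $m\geq m_0$. Writing $\phi=FS(H)$ with $H\in\cP_m(X,L)$ and simultaneously diagonalizing $H$ relative to $H_m$ in an $H_m$-orthonormal basis $\{\sigma_i\}$ with eigenvalues $e^{\mu_i}$, one has
$$e^{m\phi}=\sum_i e^{-\mu_i}|\sigma_i|^2_{h^m},\qquad E_m(\phi)=-\frac{1}{md_m}\sum_i\mu_i.$$
Integrating the first identity against $\omega^n$ (using $\int_X|\sigma_i|^2_{h^m}\omega^n=1$) gives $\int_X e^{m\phi}\omega^n=\sum_i e^{-\mu_i}$, and the AM--GM inequality $\sum e^{-\mu_i}\ge d_m e^{-\bar\mu}$ yields the initial estimate
$$E_m(\phi)\le \frac{1}{m}\log\int_X e^{m\phi}\omega^n-\frac{\log d_m}{m}.$$

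The main task is then to upgrade this into a bound involving $E(\phi)$, namely the uniform Laplace-type asymptotic
$$\frac{1}{m}\log\int_X e^{m\phi}\omega^n\le E(\phi)+\frac{\log d_m}{m}+o(1)\qquad(\phi\in\cB_m,\ \sup\phi=0).$$
To establish this I would compare $\phi$ with its Fubini--Study quantization $\phi^{(m)}=FS(H_m^\phi)$ via the identity $\int_X e^{m(\phi^{(m)}-\phi)}\omega^n=d_m$ from \eqref{eq:int-m-phi--phi-m=d-m}. Tian's first-order Bergman kernel expansion \cite{Tian89}, in its uniform-in-$\phi$ form, then pins down $\phi^{(m)}-\phi=\frac{\log(d_m/V)}{m}+o(1)$ pointwise, while the classical convergence $E_m(\phi^{(m)})\to E(\phi)$ of Bouche--Tian provides the link to $E$. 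Chaining these gives $E_m(\phi)\le E(\phi)+o(1)$, hence the proposition upon choosing $m_0$ so that $o(1)<\varepsilon$.

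The principal obstacle is the uniform-in-$\phi$ control at the Laplace step: Tian's expansion is originally stated for a fixed smooth potential, yet here $\phi$ varies over the $m$-dependent family $\cB_m$. The compactness built into the finite-dimensional Fubini--Study structure of $\cB_m$ is what makes the uniform version available, as carried out in \cite[Lemma 7.7]{BBGZ} and \cite[Lemma 5.2]{RTZ20}; one would invoke these references at precisely this step.
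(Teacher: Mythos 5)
Your algebraic preliminaries are correct: the diagonalization giving $e^{m\phi}=\sum_i e^{-\mu_i}|\sigma_i|^2_{h^m}$, the identities $E_m(\phi)=-\frac{1}{md_m}\sum_i\mu_i$ and $\int_X e^{m\phi}\omega^n=\sum_i e^{-\mu_i}$, and the AM--GM step $E_m(\phi)\le\frac1m\log\int_X e^{m\phi}\omega^n-\frac{\log d_m}{m}$ are all fine. But the step you yourself flag as the main task fails: the ``uniform Laplace-type asymptotic'' $\frac1m\log\int_X e^{m\phi}\omega^n\le E(\phi)+\frac{\log d_m}{m}+o(1)$ for $\phi\in\cB_m$ with $\sup\phi=0$ is false, and not merely for lack of uniformity. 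The left-hand side is an $L^m$-type mean of $e^{\phi}$ and concentrates at the maximum of $\phi$, not at its energy: Jensen's inequality gives
$$
\frac1m\log\int_X e^{m\phi}\omega^n\;\ge\;\frac1V\int_X\phi\,\omega^n+\frac{\log V}{m}\;\ge\;E(\phi)+\frac{\log V}{m},
$$
so your claimed bound would force $J(\phi)=\frac1V\int_X\phi\,\omega^n-E(\phi)=o(1)$ uniformly over $\{\phi\in\cB_m:\sup\phi=0\}$, which is absurd: for any fixed $\psi\in\cH(X,\omega)$ with $\sup\psi=0$ and $J(\psi)>0$, the Bergman potentials $\psi^{(m)}\in\cB_m$ converge uniformly to $\psi$, so $J(\psi^{(m)})$ stays bounded away from $0$ while $\frac1m\log\int_X e^{m\psi^{(m)}}\omega^n\to 0$. (Concretely, $\phi=FS(\mathrm{diag}(1,e^{mA},\dots,e^{mA}))$ has $\int_X e^{m\phi}\omega^n\approx 1$ but $E(\phi)$ equal to a fixed negative constant for $A$ large.) For the same reason, the appeal to Tian's expansion to get $\phi^{(m)}-\phi=\frac{\log(d_m/V)}{m}+o(1)$ pointwise, uniformly over $\phi\in\cB_m$, is not available: the expansion is for a fixed (or uniformly controlled) smooth positively curved weight, and uniformity over the degenerating family $\cB_m$ is precisely the partial-$C^0$-type control that is missing; this is why the paper's Proposition \ref{prop:E<1-e-E-m} is only a ``weak'' comparison, proved via Berndtsson's quantized maximum principle, and why the factor $(1-\varepsilon)$ and the term $\varepsilon\sup\phi$ appear in the statement at all. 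Your reduction to the cleaner bound $E_m(\phi)\le E(\phi)+\varepsilon$ discards exactly this cushion, and nothing in the proposal establishes that stronger bound.

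Note also that the paper gives no proof of Proposition \ref{prop:E-m<E}: it records the statement and cites \cite[Lemma 7.7]{BBGZ} and \cite[Lemma 5.2]{RTZ20}. Simply invoking those references would be acceptable, but in your write-up they are invoked to justify the Laplace-type inequality, which they do not contain (and which is false); so as written the argument has a genuine gap at its central step.
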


\section{Proving $\delta=\delta^A$}
\label{sec:proof}

In this section we prove our main results. Firstly, we prove Theorem \ref{thm:delta-delta-A} in the case where $L$ is a \emph{bona fide} ample line bundle, so that we can apply quantization techniques.

\begin{theorem}
\label{thm:delta=delta-A'}
Let $L$ be an ample line bundle, then one has
$
\delta^A(L;\theta)=\delta(L;\theta)
$
\end{theorem}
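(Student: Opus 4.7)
The direction $\delta^A(L;\theta)\leq\delta(L;\theta)$ is already handled by the quantization of \cite{RTZ20} via Proposition \ref{prop:E-m<E}, so the crux is $\delta^A\geq\delta$. I would fix $\lambda<\delta(L;\theta)$ and verify the analytic Moser--Trudinger inequality at exponent $\lambda$ for every $\phi\in\cH(X,\omega)$, which after translation may be assumed to satisfy $\sup\phi=0$. To set the stage, pick $\lambda'\in(\lambda,\delta)$ and a positive $\alpha<\alpha(L;\theta)$ (available since $\alpha(L;\theta)>0$), and then choose $\varepsilon>0$ so small that $\varepsilon<\alpha(\lambda'-\lambda)/(\lambda\lambda')$. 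By \eqref{eq:delta=lim-delta-m}, for $m\gg 1$ we have $\delta_m>\lambda'$ while Proposition \ref{prop:E<1-e-E-m} becomes applicable.

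\textbf{From $\cB_m$ to $\cH(X,\omega)$.} Set $\tilde\phi:=((1-\varepsilon)\phi)^{(m)}\in\cB_m$. Proposition \ref{prop:E<1-e-E-m} together with $\sup\phi=0$ gives $E_m(\tilde\phi)\geq E(\phi)$; combined with the definition of $\delta_m$ at level $\lambda'$, this produces the Bergman-space bound
$$\int_X e^{-\lambda'\tilde\phi}\,d\mu_\theta\leq C_m\, e^{-\lambda' E(\phi)}.$$
Now decompose
$$e^{-\lambda\phi}=e^{-\lambda\tilde\phi}\cdot e^{\lambda(\tilde\phi-(1-\varepsilon)\phi)}\cdot e^{-\lambda\varepsilon\phi}$$
and apply H\"older with exponents $p_1=\lambda'/\lambda$, $p_3=\alpha/(\lambda\varepsilon)$, and $p_2$ the complementary exponent, positive precisely by the smallness of $\varepsilon$. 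The three resulting integrals are then controlled respectively by: (i) the Bergman-space bound just derived; (ii) the identity \eqref{eq:int-m-phi--phi-m=d-m} combined with a H\"older against $d\mu_\theta$ using the klt bound \eqref{eq:psi-L-p}, which works once $m$ is large enough; and (iii) Tian's analytic $\alpha$-invariant \eqref{eq:def-alpha}. Multiplying these yields $\int_X e^{-\lambda(\phi-E(\phi))}\,d\mu_\theta\leq C$, so $\delta^A(L;\theta)\geq\lambda$, and letting $\lambda\uparrow\delta$ gives the equality.

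\textbf{Main obstacle.} The delicate point is the three-term H\"older: a naive two-term decomposition only yields the weaker inequality with $E(\phi)/(1-\varepsilon)$ in place of $E(\phi)$, a gap reflecting the $(1-\varepsilon)$-rescaling built into Proposition \ref{prop:E<1-e-E-m}. The auxiliary factor $e^{-\lambda\varepsilon\phi}$, absorbed by Tian's $\alpha$-invariant, is exactly what closes that gap---at the cost of choosing $\varepsilon$ small relative to both $\lambda'-\lambda$ and $\alpha$.
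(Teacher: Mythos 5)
Your proposal is correct and follows essentially the same route as the paper: both directions hinge on Propositions \ref{prop:E-m<E} and \ref{prop:E<1-e-E-m}, the convergence \eqref{eq:delta=lim-delta-m}, and in Step 2 the identical three-factor decomposition handled by generalized H\"older, with the factors controlled by $\delta_m$, the identity \eqref{eq:int-m-phi--phi-m=d-m} plus the klt bound \eqref{eq:psi-L-p}, and the $\alpha$-invariant \eqref{eq:def-alpha}. The only (cosmetic) difference is that you use fixed H\"older exponents $\lambda'/\lambda$ and $\alpha/(\lambda\varepsilon)$ where the paper takes an $m$-dependent exponent $\sqrt{m}/\lambda$ on the Bergman-kernel factor; your bookkeeping with $\varepsilon<\alpha(\lambda'-\lambda)/(\lambda\lambda')$ is equivalent.
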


\begin{proof}
The proof splits into two steps.

\textbf{Step 1: $\delta^A(L;\theta)\leq\delta(L;\theta)$}. 

In the view of \eqref{eq:delta=lim-delta-m}, it suffices to show that, for any $\lambda\in(0,\delta^A(L;\theta))$ one has $\delta_m(L;\theta)>\lambda$ for all $m\gg1$. In other words, for any $m\gg1$, we need to find some constant $C_{m,\lambda}>0$ such that
$$
\int_Xe^{-\lambda(\phi-E_m(\phi))}d\mu_\theta<C_{m,\lambda}\text{ for all }\phi\in\cB_m(X,\omega).
$$
Assume that $\sup\phi=0$. For any small $\varepsilon>0$, by Proposition \ref{prop:E-m<E} and H\"older's inequality,
\begin{equation*}
	\begin{aligned}
		\int_Xe^{-\lambda(\phi-E_m(\phi))}d\mu_\theta&\leq\int_Xe^{-\lambda(\phi-(1-\varepsilon)E(\phi))+\lambda\varepsilon}d\mu_\theta
		=e^{\lambda\varepsilon}\cdot\int_Xe^{-\lambda(1-\varepsilon)(\phi-E(\phi))}\cdot e^{-\lambda\varepsilon\phi} d\mu_\theta\\
		&\leq e^{\lambda\varepsilon}\bigg(\int_Xe^{\frac{-\lambda(1-\varepsilon)}{1-\frac{\lambda\varepsilon}{\alpha}}(\phi-E(\phi))}d\mu_\theta\bigg)^{1-\frac{\lambda\varepsilon}{\alpha}}\bigg(\int_Xe^{-\alpha\phi}d\mu_\theta\bigg)^{\frac{\lambda\varepsilon}{\alpha}}
	\end{aligned}
\end{equation*}
holds for all $m\geq m_0(X,L,\omega,\varepsilon)$, where $\alpha\in(0,\alpha(L;\theta))$ is some fixed number.
We may fix $\varepsilon\ll1$ such that
$$
\frac{\lambda(1-\varepsilon)}{1-\frac{\lambda\varepsilon}{\alpha}}<\delta^A(L;\theta).
$$
Then by \eqref{eq:def-delta-A} and \eqref{eq:def-alpha}, there exist $C_\lambda>0$ and $C_\alpha>0$ such that
$$
\int_Xe^{-\lambda(\phi-E_m(\phi))}d\mu_\theta<e^{\lambda\varepsilon}(C_\lambda)^{1-\frac{\lambda\varepsilon}{\alpha}}(C_\alpha)^{\frac{\lambda\varepsilon}{\alpha}}
$$
for all $\phi\in\cB_m(X,\omega)$ whenever $m$ is large enough. This proves the assertion.

\textbf{Step 2: $\delta^A(L;\theta)\geq\delta(L;\theta)$.}

It suffices to show that,
for any $\lambda\in(0,\delta(L;\theta))$, there exists $C_\lambda>0$ such that
$$
\int_Xe^{-\lambda(\phi-E(\phi))}d\mu_\theta< C_\lambda\text{ for any $\phi\in\cH(X,\omega)$.}
$$
Again assume that $\sup\phi=0$. Fix any number $\alpha\in(0,\alpha(L;\theta))$. Fix $p_0>1$ such that \eqref{eq:psi-L-p} holds for any $p\in(1,p_0)$.
Let also $\varepsilon>0$ be a sufficiently small number, to be fixed later. Set 
$\tilde{\phi}:=(1-\varepsilon)\phi.$
Then by Proposition \ref{prop:E<1-e-E-m} and the generalized H\"older inequality, for any $m\geq m_0(X,L,\omega,\varepsilon)$, we can write
\begin{equation*}
    \begin{aligned}
    \int_Xe^{-\lambda\big(\phi-E(\phi)\big)}d\mu_\theta&\leq\int_Xe^{-\lambda\big(\phi-E_m(\tilde{\phi}^{(m)})\big)}d\mu_\theta
    = \int_Xe^{\lambda\big(\tilde{\phi}^{(m)}-\tilde{\phi}\big)}\cdot e^{-\lambda\big(\tilde{\phi}^{(m)}-E_m(\tilde{\phi}^{(m)})\big)}\cdot e^{-\lambda\varepsilon\phi}d\mu_\theta\\
    &\leq\bigg(\int_Xe^{\sqrt{m}(\tilde{\phi}^{(m)}-\tilde{\phi})}d\mu_\theta\bigg)^{\frac{\lambda}{\sqrt{m}}}\bigg(\int_Xe^{\frac{-\lambda\big(\tilde{\phi}^{(m)}-E_m(\tilde{\phi}^{(m)})\big)}{1-\frac{\lambda}{\sqrt{m}}-\frac{\lambda\varepsilon}{\alpha}}}d\mu_\theta\bigg)^{1-\frac{\lambda}{\sqrt{m}}-\frac{\lambda\varepsilon}{\alpha}}\bigg(\int_Xe^{-\alpha\phi}d\mu_\theta\bigg)^{\frac{\lambda\varepsilon}{\alpha}}\\
    &\leq (d_m)^{\frac{\lambda}{m}}\bigg(\int_Xe^{-\frac{\sqrt{m}\psi}{\sqrt{m}-1}}\omega^n\bigg)^{\frac{\lambda}{\sqrt{m}}-\frac{\lambda}{m}}\bigg(\int_Xe^{\frac{-\lambda\big(\tilde{\phi}^{(m)}-E_m(\tilde{\phi}^{(m)})\big)}{1-\frac{\lambda}{\sqrt{m}}-\frac{\lambda\varepsilon}{\alpha}}}d\mu_\theta\bigg)^{1-\frac{\lambda}{\sqrt{m}}-\frac{\lambda\varepsilon}{\alpha}}\bigg(\int_Xe^{-\alpha\phi}d\mu_\theta\bigg)^{\frac{\lambda\varepsilon}{\alpha}},\\
    \end{aligned}
\end{equation*}
where we used \eqref{eq:def-mu-theta} and \eqref{eq:int-m-phi--phi-m=d-m} in the last inequality.
We now fix $\varepsilon\ll1$ and $m\gg m_0(X,L,\omega,\varepsilon)$ such that
$$
\frac{\sqrt{m}}{\sqrt{m}-1}<p_0\text{ and }\frac{\lambda}{1-\frac{\lambda}{\sqrt{m}}-\frac{\lambda\varepsilon}{\alpha}}<\delta_m(L;\theta).
$$
Then by \eqref{eq:psi-L-p}, \eqref{eq:def-delta-m} and \eqref{eq:def-alpha} there exist $A_m>0$, $C_{m,\lambda}>0$ and $C_\alpha>0$ (recall $\sup\phi=0$) such that
\begin{equation*}
    \begin{aligned}
    \int_Xe^{-\lambda(\phi-E(\phi))}d\mu_\theta&< (d_m)^{\frac{\lambda}{m}}\cdot(A_m)^{\frac{\lambda}{\sqrt{m}}-\frac{\lambda}{m}}\cdot (C_{m,\lambda})^{1-\frac{\lambda}{\sqrt{m}}-\frac{\lambda\varepsilon}{\alpha}}\cdot(C_\alpha)^{\frac{\lambda\varepsilon}{\alpha}}.\\
    \end{aligned}
\end{equation*}
Note that all the constants are uniform, independent of $\phi$. So we finally arrive at $\int_Xe^{-\lambda(\phi-E(\phi))}d\mu_\theta<C_\lambda$ for some uniform $C_\lambda>0$, as desired.

\end{proof}

\begin{proof}
	[Proof of Theorem \ref{thm:delta-delta-A}]
	Since $\delta(L;\theta)=\delta^A(L;\theta)$ holds for any ample line bundle, by rescaling, it holds for any ample $\QQ$-line bundle. Now by the continuity of $\delta$ and $\delta^A$ in the ample cone (cf. \cite{Zha20}), the same assertion holds for any ample $\RR$-line bundle.
\end{proof}

\begin{proof}
	[Proof of Theorem \ref{thm:YTD}] The result follows from Theorem \ref{thm:delta-delta-A} and Corollary \ref{cor:delta-A>1=>tKE}.
\end{proof}

\begin{proof}
	[Proof of Theorem \ref{thm:cscK}]The result follows from Theorem \ref{thm:delta-delta-A} and Corollary \ref{cor:delta-cscK}.
\end{proof}

By Proposition \ref{prop:delta=entropy} we also obtain an algebraic characterization of the coercivity threshold of  the entropy. One should compare this with the non-Archimedean formulation \cite[(2.9)]{BoJ18} proposed by Berman.
\begin{corollary}
	For any ample $\RR$-line bundle $L$ one has
	$$
\delta(L)=\sup\bigg\{\lambda>0\bigg|
    \exists\ C_\lambda>0\text{ s.t. }
    H(\phi)\geq\lambda(I-J)(\phi)-C_\lambda\text{ for all }
    \phi\in\mathcal{H}(X,\omega)
    \bigg\}.
$$
\end{corollary}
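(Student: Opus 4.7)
The plan is to derive this identity by a one-line combination of two results already in the paper: the Main Theorem (Theorem \ref{thm:delta-delta-A}), which asserts $\delta(L;\theta) = \delta^A(L;\theta)$, and Proposition \ref{prop:delta=entropy}, which characterizes $\delta^A(L)$ (in the case $\theta=0$) as exactly the supremum on the right-hand side of the corollary. The corollary is thus a bridge statement that converts the analytic description of the entropy coercivity threshold into a purely algebraic invariant.

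Concretely, I would proceed as follows. First, specialize Theorem \ref{thm:delta-delta-A} to $\theta=0$ (permissible since the zero current is trivially klt) to obtain $\delta(L)=\delta^A(L)$ for any ample $\RR$-line bundle $L$; this is where all the substantive work of the paper is concentrated, combining the quantization estimate of Proposition \ref{prop:E<1-e-E-m}, the comparison Proposition \ref{prop:E-m<E}, the valuative formula \eqref{eq:delta=lim-delta-m} and the continuity of $\delta$ and $\delta^A$ in the ample cone. Second, invoke Proposition \ref{prop:delta=entropy}, quoted from \cite{Zha20}, which rewrites $\delta^A(L)$ as
\[
\sup\bigl\{\lambda>0 \bigm| \exists\, C_\lambda>0 \text{ s.t. } H(\phi)\geq \lambda(I-J)(\phi)-C_\lambda \text{ for all } \phi\in\mathcal{H}(X,\omega)\bigr\}.
\]
Chaining the two equalities yields the desired identity.

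Because both inputs are already available, there is essentially no obstacle at this stage; the corollary is really a clean repackaging of the Main Theorem. The only subtlety I would flag is that Proposition \ref{prop:delta=entropy} is originally formulated for an ample line bundle, while the corollary is stated for ample $\RR$-line bundles. This extension, however, poses no real difficulty: either one checks that both sides of the entropy identity are homogeneous and continuous on the ample cone (allowing one to pass from $\QQ$- to $\RR$-line bundles by approximation, exactly as in the proof of Theorem \ref{thm:delta-delta-A}), or one verifies directly that the arguments of \cite{Zha20, Proposition 3.5} extend verbatim to the $\RR$-polarized setting. Either route is routine, and the corollary follows without further effort.
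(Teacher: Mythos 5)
Your proposal is correct and matches the paper's own argument: the corollary is obtained exactly by combining Theorem \ref{thm:delta-delta-A} (with $\theta=0$) and Proposition \ref{prop:delta=entropy}. The $\RR$-line bundle subtlety you flag is already covered, since the paper notes that the discussion in Section \ref{sec:existence} (including Proposition \ref{prop:delta=entropy}) holds for general K\"ahler classes.
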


\begin{remark}
\label{rmk:coupled-case}\rm{
	Finally we explain how to generalize our approach to the coupled KE/soliton case considered in \cite{RTZ20}, which then yields a uniform YTD theorem for the existence of coupled KE/soliton metrics. The extension to the coupled KE case is straightforward: one only needs to replace $\phi$ and $E(\phi)$ by $\sum_i\phi_i$ and $\sum_i E_{\omega_i}(\phi_i)$ respectively, and then slightly adjust the proof of Theorem \ref{thm:delta=delta-A'}. For the more general coupled soliton case, essentially one only needs to replace $E$ by its ``$g$-weighted'' version, $E^g$, and then adjust Propositions \ref{prop:E<1-e-E-m} and \ref{prop:E-m<E} accordingly, which can be done with the help of \cite[Proposition 4.4]{BWN14}, the asymptotics for weighted Bergman kernels. Then the argument goes through almost verbatim. See our previous work \cite{RTZ20} for more explanations. The details are left to the interested reader.}
\end{remark}

\bibliography{ref.bib}
\bibliographystyle{abbrv}
% \bibliographystyle{numeric}
% \printbibliography

\end{document}